\newcommand{\nin}{\not \in}
\newcommand{\reals}{\mathbb{R}}
\newcommand{\naturals}{\mathbb{N}}
\newcommand{\integers}{\mathbb{Z}}
\newcommand{\inv}[1]{{#1}^{-1}}
\newcommand{\kerm}{{\textrm{ ker }}}
\newcommand{\autm}{\textrm{Aut}}
\newcommand{\boundaryinf}{\partial}
\newtheorem*{corollary*}{Corollary}
\newtheorem*{theorem*}{Theorem}
\newtheorem{theorem}{Theorem}
\newtheorem{lemma}{Lemma}
\newtheorem{proposition}[lemma]{Proposition}
\newtheorem{corollary}{Corollary}
\newtheorem{observation}[lemma]{Observation}
\theoremstyle{definition}
\newtheorem{definition}{Definition}
\newtheorem{example}{Example}
\newtheorem*{example*}{Example}
\theoremstyle{remark}
\newtheorem{remark}{Remark}
\newcommand{\ds}{\displaystyle}
\newcommand{\metricball}[3]{\textrm{Ball}_{#1}(#2, #3)}
\title[Controll Connectivity for Semi-Direct Products...]{Controlled Connectivity for Semi-Direct Products Acting on Locally Finite Trees}
\author{
Keith Jones \\ 
}
\begin{document}

\begin{abstract}

In 2003 Bieri and Geoghegan generalized the Bieri-Neuman-Strebel
invariant $\Sigma^1$ by defining $\Sigma^1(\rho)$, $\rho$ an isometric action by a
finitely generated group $G$ on a proper CAT(0) space $M$. In this paper,
we show how the natural and well-known connection between Bass-Serre
theory and covering space theory provides a framework for the calculation of
$\Sigma^1(\rho)$ when $\rho$ is a cocompact action by $G = B \rtimes A$,
$A$ a finitely generated group,  on a locally finite Bass-Serre tree $T$ for $A$.
This framework leads to a theorem providing conditions for
including an endpoint in, or excluding an endpoint from,
$\Sigma^1(\rho)$. When $A$ is a finitely generated free group acting
on its Cayley graph, we can restate this theorem from a more
algebraic perspective, which leads to some general results on
$\Sigma^1$ for such actions.
\end{abstract}

\maketitle

\section{Introduction}

In \cite{bierigeoghegansl2}, the Bieri and Geoghegan begin with the following:

\begin{quote}

Given a group $G$ and a contractible metric space $M$, consider the
set Hom($G$, Isom($M$)) of all actions by $G$ on $M$ by isometries.
Are there invariants of such actions which distinguish one from
another? Are there topological properties which one such action might
possess while another might not?

\end{quote}

The tool they apply to draw distinctions between such actions is controlled 
$n$-connectivity, which is developed in \cite{amsmemoir}, and which we briefly
describe here.  Suppose $\rho$ is an isometric action by a group $G$ having type 
$F_n$ on a proper CAT(0) metric space $M$. Fixing a basepoint $b\in M$, the CAT(0) 
boundary, $\boundaryinf M$, can be thought of as the set of geodesic
rays $\tau$ emanating from $b$.\footnote{For background on the
topological finiteness property ``type $F_n$'', see \cite[\S
7.2]{geogheganbook}, and for background on CAT(0) metric spaces and
their boundaries see \cite[II.1 and II.8]{bridsonhaefliger}. A metric space is {\em
proper} if each closed metric ball is compact.} For an end point $e
\in \boundaryinf M$ represented by a ray $\tau$, there is a nested
family of subsets $HB_k(\tau)$, $k \in \reals$, called ``horoballs'' which serve as 
metrics balls (in $M$) ``at $e$''.\footnote{For background on
horoballs, see \cite[\S10.1]{amsmemoir}. The convention followed
there and in this paper is that as $k$ increases, we approach $e$, the
reverse of the convention in \cite{bridsonhaefliger}.} This provides a sense
of direction in $M$, which can be ``lifted to $G$'' by $\rho$
via a $G$-equivariant map from the $n$-skeleton
of the universal cover of a $K(G,1)$. For a point $e \in \boundaryinf
M$, if the lifts of the horoballs about $e$ are (roughly)
$(n-1)$-connected, then we say $\rho$ is controlled $(n-1)$-connected
over $e$.
Bieri and Geoghegan show that this is independent of choice of
$K(G,1)$ or equivariant map. The invariant $\Sigma^n(\rho)$ is the subset of
$\boundaryinf M$ consisting of points over which $\rho$ is controlled
$(n-1)$-connected.
This definition generalizes the Bieri-Neumann-Strebel-Renz (BNSR)
invariants $\Sigma^n(G)$, which are open subsets of the
CAT(0) boundary of the vector space $G_{ab} \otimes
\reals$. A key difference between $\Sigma^1(\rho)$ and the BNSR
invariant $\Sigma^1(G)$ is that $\Sigma^1(\rho)$ is in general {\em
not} an open subset of $\boundaryinf M$.

Apart from enabling one to draw geometric
distinctions between isometric actions by a group on a proper CAT(0)
space, the invariant can also provide group theoretical information:
if the orbits under an action $\rho$ are discrete, then the point stabilizers
are finitely generated if and only if $\Sigma^1(\rho) = \boundaryinf
M$ \cite[Theorem A and Boundary Criterion]{amsmemoir}.

When $M = T$ is a locally finite (simplicial) tree, the CAT(0) boundary is
a metric Cantor set.  Initial results in
\cite{amsmemoir} led the authors to ask whether in this case
$\Sigma^1(\rho)$ might
always be one of $\emptyset$, a singleton, or the entire boundary
$\boundaryinf T$. Work in \cite{kjones} establishes a class of actions
for which this is the case. However, work by Ralf Lehnert in his diploma
thesis demonstrates that other subsets of $\boundaryinf T$ can be realized
as $\Sigma^1(\rho)$ for certain actions \cite{lehnert}. This hints at
a potentially rich world of $\Sigma^1$ invariants, which we further
explore here. 

\subsubsection*{Acknowledgments} This paper extends results from 
the author's Ph.D dissertation at Binghamton University. It
would not have been possible without the continuous support and
encouragement of Ross Geoghegan. The author also wishes to thank 
Robert Bieri for his generosity with his time in discussing some of
the ideas involved in this work. Additionally, the author in indebted to the
anonymous referee, whose efforts have lead to a more interesting and clearer
paper.

\subsection{Statement of results}
We restrict our attention to $\Sigma^1$, and study only the following
scenario:

\begin{definition}[Actions of interest]
\label{myactions}
Let $A$ be a finitely generated group with finite generating set $R$, and let 
$T$ be a locally finite\footnote{A simplicial tree is a proper metric
space iff it is locally finite.} simplicial tree on which $A$ acts
cocompactly and with finitely generated stabilizers. 
For a group $B$, suppose we have a homomorphism
$\varphi: A \rightarrow \autm(B)$, 
and let $G = B \rtimes_\varphi A$ be the
resulting semidirect product. Elements of $G$ are of the form
$(b,a) \quad a \in A,\ b \in B$ 
and multiplication in $G$ operates under the rule
\[ (b_1, a_1)(b_2, a_2) = (b_1 a_1b_2a_1^{-1}, a_1a_2) =
(b_1\varphi_{a_1}(b_2), a_1a_2). \]

Suppose $G$ is finitely generated. Then it follows that $B$ is
finitely generated as an $A$-group. By this, we mean there is a finite subset 
$S \subset B$ such that the set $\{\varphi_a(S) \mid a \in A\}$
generates $B$, and so $G$ is generated by $S \cup R$.

The natural projection $G \twoheadrightarrow A$ induces an action
$\rho$ by $G$ on $T$
which contains the normal subgroup $B$ in its kernel. We investigate
$\Sigma^1(\rho)$.
\end{definition}

\begin{remark}
\label{nonfgstabilizersremark}
As mentioned, if the point stabilizers under $\rho$ are finitely
generated, then $\Sigma^1(\rho) = \boundaryinf T$ 
\cite[Theorem A and Boundary Criterion]{amsmemoir}. Moreover, since $T$ is
locally finite, all point stabilizes are commensurable, so if any one is
finitely generated, then all are. Thus with the assumption
that the stabilizers under the $A$ action on $T$ are finitely
generated, in order to obtain ``interesting'' invariants (those with
$\Sigma^1(\rho) \neq \boundaryinf T$), one must  assume
that $B$ is not finitely generated, since the stabilizers under $\rho$
are simply semidirect products of $B$ with the stabilizers in $A$. 
\end{remark}

%

\subsubsection{Main Result} 
With the action $\rho: G \rightarrow Isom(T)$ as defined above,  we
apply the relationship between Bass-Serre theory and covering space
theory to construct a commutative diagram of $G$-equivariant cellular 
maps between CW-complexes:
\[ \xymatrix{ 
\tilde X \ar[d]^p \ar[r]^{\tilde r}  &  T \ar[d]^{id} \\
\bar X \ar[d]^q \ar[r]^{\bar r} & T \ar[d]^{mod\ G} \\
X \ar[r]^r & V = T \backslash G}\]
where $X$ is a $K(G,1)$, $\bar X$ is a $K(B,1)$, $\tilde X$ is a
contractible universal cover, $p$ and $q$ are covering
projections, and $r$, $\bar r$, and $\tilde r$ are retracts.\footnote{This is the topological construction of the
Bass-Serre tree \cite[\S 6.2]{geogheganbook}, \cite{scottandwall},
discussed further in Section \ref{bstheorysec}.} For a geodesic ray
$\tau$ in $T$ and $k \in
\integers$, consider the horoball $HB_k(\tau)$.\footnote{A precise
description of $HB_k(\tau)$ is given in Equation \eqref{horoballdef} in
subsection \ref{consequencessubsection}.} 
 For  $W \subset X$ a finite subcomplex, set 
\[\bar X_{(\tau, k, W)} = \inv{\bar r}(HB_k(\tau)) \cap \inv{q}(W)
\subset \bar X.\]

\begin{theorem}
\label{Sigma1bymappingtreethm}  
Let $e \in \boundaryinf T$ be represented by a geodesic ray $\tau$.

\begin{enumerate}[(i)]
\item If there exists a finite subcomplex $W \subset X$ 
such that for every $k \in \integers$,  $\bar X_{(\tau, k, W)}$
is connected and the map on $\pi_1$ induced by the inclusion $\bar
X_{(\tau, k, W)} \hookrightarrow \bar X$ is surjective,
then $e \in \Sigma^1(\rho)$.

\item If for every $k \in \integers$ 
and every finite subcomplex $W \subset X$ such that $\bar X_{(\tau, k, W)}$
is connected, the induced map on $\pi_1$ is not surjective, then $e \nin \Sigma^1(\rho)$.
\end{enumerate}
\end{theorem}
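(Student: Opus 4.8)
The plan is to rewrite $e\in\Sigma^1(\rho)$ as the assertion that $\rho$ is controlled $0$-connected over $e$, fix a genuinely $G$-cocompact model for the control map, and then transport the resulting essential-connectivity condition down the displayed tower; the finite subcomplex $W$ is exactly the device that lets the (non-cocompact) graph-of-spaces complex $X$ carry the computation. In detail: since $G$ is finitely generated, fix a contractible free $G$-CW-complex $Y$ with $Y^{(1)}/G$ finite and a $G$-equivariant control map $h\colon Y^{(1)}\to T$. By the model-independence of $\Sigma^1(\rho)$ \cite{amsmemoir}, $e\in\Sigma^1(\rho)$ if and only if the filtration $\{h^{-1}(HB_k(\tau))\}_{k\in\integers}$ is essentially connected (the essential $(-1)$-connectivity clause holding automatically, since the $G$-action on $T$ is cocompact). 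Choose a cellular $G$-map $Y\to\tilde X$ compatible with $\tilde r$, push the finite complex $Y^{(1)}/G$ into $X$, and let $W\subseteq X$ be a finite subcomplex containing its image; then this map carries $h^{-1}(HB_k(\tau))\subseteq Y^{(1)}$ into the preimage of $W$ in $\tilde X$. Since $B$ acts trivially on $T$ under $\rho$, the map $\tilde r$ factors as $\bar r\circ p$, so $\tilde r^{-1}(HB_k(\tau))=p^{-1}(\bar r^{-1}(HB_k(\tau)))$, and intersecting with the preimage of $W$ identifies the relevant part of $\tilde X$ as $p^{-1}(\bar X_{(\tau,k,W)})$; moreover $p$ restricts there to the pullback of the universal covering $\tilde X\to\bar X$. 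The task thus becomes: compare the essential connectivity of $\{h^{-1}(HB_k(\tau))\}$ with that of $\{p^{-1}(\bar X_{(\tau,k,W)})\}$.

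The middle step is covering-space theory for $p\colon\tilde X\to\bar X$, the universal covering of $\bar X$ with deck group $\pi_1(\bar X)=B$: over a path component $C$ of $\bar X_{(\tau,k,W)}$ the components of $p^{-1}(C)$ form a $B/\mathrm{im}(\pi_1(C)\to B)$-torsor, so $p^{-1}(\bar X_{(\tau,k,W)})$ is connected precisely when $\bar X_{(\tau,k,W)}$ is connected and the inclusion-induced map $\pi_1(\bar X_{(\tau,k,W)})\to B$ is onto; tracking how these components sit inside $p^{-1}(\bar X_{(\tau,j,W)})$ for $j\le k$ upgrades this to: the filtration $\{p^{-1}(\bar X_{(\tau,k,W)})\}$ is essentially connected precisely when, cofinally in $k$, $\bar X_{(\tau,k,W)}$ is connected with $\pi_1(\bar X_{(\tau,k,W)})\to B$ onto. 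This yields (i): if some finite $W$ enjoys these two properties at every $k$, the displayed filtration is essentially connected, hence so is $\{h^{-1}(HB_k(\tau))\}$, hence $e\in\Sigma^1(\rho)$. For (ii) one argues contrapositively: if $e\in\Sigma^1(\rho)$, then $\{h^{-1}(HB_k(\tau))\}$ is essentially connected with a uniform control constant (the passage from ``for every $k$ there is $j$'' to a uniform bound being a standard consequence of the $G$-cocompactness of $Y$), and running the comparison in reverse produces at least one level $k$ and at least one finite subcomplex $W'$ with $\bar X_{(\tau,k,W')}$ connected and $\pi_1(\bar X_{(\tau,k,W')})\to B$ onto, contradicting the hypothesis of (ii).

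The main obstacle will be the comparison step, in two respects. First, $Y\to\tilde X$ is not a homotopy equivalence onto its image, so matching the two horoball filtrations only ``up to a bounded shift'' in the level $k$ needs care. Second, and more essentially, one must know that a single finite $W$ can simultaneously detect the connectivity and the $\pi_1$-data of the non-compact subcomplexes $\bar r^{-1}(HB_k(\tau))$ for all levels $k$ at once. The tool here is the $A=G/B$-action on $\bar X$ (the deck group of $q$), with respect to which $\bar r$ is equivariant, together with the identity $\bigcup_{a\in A}a\cdot HB_k(\tau)=T$ valid for every single $k$: this holds because the $A$-action on $T$ is cocompact, so every vertex of $T$ lies within $\mathrm{diam}(T/A)$ of the $A$-orbit of $\tau(n)$ for every $n$, while the Busemann function is $1$-Lipschitz, so $\tau(n)$ lies arbitrarily deep inside $HB_k(\tau)$. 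Using this one funnels the compact generating data of $B$ as an $A$-group, and of the fundamental groups of the relevant pieces, into a single finite region. Throughout, a secondary bookkeeping issue is that $\bar X_{(\tau,k,W)}$ may be disconnected, so one must track its components and the components of their $p$-preimages in parallel.
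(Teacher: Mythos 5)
Your central mechanism is the right one, and it is the same as the paper's: connectivity of $p^{-1}(\bar X_{(\tau,k,W)})$ in the universal cover is governed by connectivity of $\bar X_{(\tau,k,W)}$ together with surjectivity of $\pi_1(\bar X_{(\tau,k,W)})\to B$ (Proposition \ref{preimagedisconnectedprop}), and each component of such a preimage surjects onto the base (Proposition \ref{componentsurjectlemma}). But your plan routes everything through an auxiliary cocompact model $Y$ and a comparison map $Y\to\tilde X$, and the step you yourself flag as ``the main obstacle'' is exactly where the argument is left genuinely incomplete. The paper avoids that comparison entirely: it works with the non-cocompact complex $\tilde X$ directly, using the ``extended'' Definition \ref{controlledconnectivitydef} (quantified over cocompact $G$-subspaces $\tilde W\subseteq \tilde W'$), which Proposition \ref{definitionscoincideprop} has already shown to be equivalent to the classical definition for finitely generated $G$. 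With that in hand both directions are short: for (i) one takes $\tilde L$ cocompact, enlarges its image in $X$ together with $W$ to a \emph{suitable} subcomplex $W'$ (Definition \ref{suitablesubcomplexdef}, Observation \ref{nicecomplexlemma}), notes that $\pi_1$-surjectivity persists under enlargement, and applies Proposition \ref{preimagedisconnectedprop} with lag $0$; for (ii) one takes $W$ suitable and containing the image of $\tilde L'$ in $X$, so that $\bar X_{(\tau,-k,W)}$ is automatically connected, the hypothesis then forces non-surjectivity, and the resulting disconnected preimage separates points of $\tilde L$ for every lag.

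The concrete gap is the device that produces, from a fixed finite region of $X$, connectivity of $\bar X_{(\tau,k,W)}$ at \emph{every} level $k$ simultaneously and compatibly with enlarging $W$. Your proposed tool ($\bigcup_{a\in A}a\cdot HB_k(\tau)=T$ plus ``funneling generating data into a finite region'') does not obviously accomplish this. What is actually needed is that $W$ contain, for each vertex $v$ of the fundamental domain, loops generating the stabilizer $A_v$, and for each edge its base $1$-cell; then $q^{-1}(W)\cap\bar r^{-1}(U)$ is connected for \emph{every} subtree $U$ of $T$ (Lemma \ref{intersectionwithhoroballlemma}), which is what makes the enlargement $W\mapsto W'$ harmless in (i) and makes the connectivity hypothesis in (ii) satisfiable for arbitrarily large finite subcomplexes. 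Two further cautions: your claimed equivalence (``essentially connected iff cofinally in $k$ the set is connected and $\pi_1$-onto'') is stronger than what the theorem asserts --- the statement is deliberately not an iff and leaves a gap between (i) and (ii); and your contrapositive for (ii) still needs the argument that controlled connectivity forces connectivity of the \emph{entire} preimage $p^{-1}(\bar X_{(\tau,k,W')})$ for some $k$ and $W'$, not merely joinability of the points coming from $Y$ --- this is where Proposition \ref{componentsurjectlemma} must be invoked, as the paper does.
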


\subsubsection{Consequences and examples}
\label{consequencessubsection}
Theorem \ref{Sigma1bymappingtreethm} has a number of consequences in the case
where $A$ is a free group and $T$ is its Cayley graph. In this case, the
 vertices
of $T$ are the elements of $A$. Let $e$ be an endpoint of
$T$ and suppose the geodesic ray $\tau$ represents $e$. For an integer
$k$, let $A_k(\tau)$ be the elements of $A$ that form the vertex set
of the horoball $HB_k(\tau)$.  To avoid confusion with the
group $B$, we will
use the notation $\metricball{r}{X}{p}$ to refer to the metric ball of radius
$r$ in the space $X$ about the point $p$. Just as the horoball
$HB_k(\tau)$ can be written as the nested union of closed metric balls
in $T$:
\begin{equation}
\label{horoballdef}
 HB_k(\tau) = \bigcup_{l \geq \max\{0,k\}}
\overline{\metricball{l-k}{T}{\tau(l)}},
\end{equation}
the set $A_k(\tau)$ can be written as a nested union of closed metric
balls in the word metric on $A$:
\begin{equation}
\label{ahorodef}
A_k(\tau) = \bigcup_{l \geq \max\{0,k\}}
\overline{\metricball{l-k}{A}{\tau(l)}}.
\end{equation}

We will say $B$ is finitely generated over a
{\em subset} $A' \subseteq A$ if there is a finite subset $S \subseteq B$
such that $\{as\inv{a} \mid s \in S, a \in A' \}$ generates
$B$. \\

In Section \ref{freegpsection}, we show that in this context 
Theorem \ref{Sigma1bymappingtreethm} can be restated as follows:
\begin{theorem}
\label{Sigma1bygeneratorsthm} 
Let $A$ be a finitely generated free group, and let $T$ be its Cayley
graph with respect to a free basis. For the action $\rho$ as in
Theorem \ref{Sigma1bymappingtreethm}, and for $e \in \boundaryinf T$
represented by geodesic ray $\tau$, 
\begin{enumerate}[(i)]
\item If there is a finite set $S \subseteq B$ such that for each $k \in
\integers_{\geq 0}$, $S$ generates $B$ over $A_k(\tau)$, then $e \in
\Sigma^1(\rho)$.

\item If for each $k \in \integers_{\leq 0}$, $B$ is not finitely
generated over $A_k(\tau)$, then $e \nin \Sigma^1(\rho)$.

\end{enumerate}
\end{theorem}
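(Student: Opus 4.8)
The plan is to show that this theorem is essentially a translation of Theorem \ref{Sigma1bymappingtreethm} into the combinatorial language available when $A$ is free and $T$ is its Cayley graph, so the work is all in matching up the two pictures. First I would fix the standard model: since $A$ is free on $R$, a $K(A,1)$ is a wedge of $|R|$ circles $Y$, its universal cover is $T$ itself, and one can take $X = $ (a $K(B,1)$-bundle-like complex over $Y$), i.e. realize the diagram in the excerpt so that $\bar X$ is a $K(B,1)$ with a single vertex $v$, one $1$-cell $s$ for each element of a generating set $S$ of $B$ as an $A$-group, together with $2$-cells encoding relations; the retraction $\bar r \colon \bar X \to T$ collapses $\bar X$ onto the vertex set of $T = A$, sending the $1$-cell $a s a^{-1}$ (a conjugate translate of $s$) to the vertex $a \in A$. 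The key dictionary entry is: $\inv{\bar r}(HB_k(\tau))$ deformation retracts onto the subcomplex of $\bar X$ supported on the vertices $A_k(\tau)$, and its fundamental group is generated by the loops $a s a^{-1}$ with $a \in A_k(\tau)$, $s\in S$; thus $\pi_1$ of this set surjects onto $\pi_1(\bar X) = B$ precisely when $\{a s a^{-1} : s \in S, a \in A_k(\tau)\}$ generates $B$, i.e. when $S$ generates $B$ over $A_k(\tau)$.

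With that dictionary in hand, part (i) follows: given a finite $S \subseteq B$ generating $B$ over every $A_k(\tau)$, $k \geq 0$, I would take $W \subseteq X$ to be the finite subcomplex carrying the $1$-cells labeled by $S$ (and the $0$-cell). Then $\bar X_{(\tau,k,W)} = \inv{\bar r}(HB_k(\tau)) \cap \inv q(W)$ is, up to the deformation retraction above, the subcomplex on vertices $A_k(\tau)$ using only $S$-edges and their $A_k(\tau)$-translates; connectivity holds because $A_k(\tau)$ is a connected subtree of $T$ (it is a nested union of metric balls, by \eqref{ahorodef}) with the $s$-loops attached at each of its vertices, and surjectivity on $\pi_1$ onto $\bar X \simeq B$ is exactly the hypothesis that $S$ generates $B$ over $A_k(\tau)$. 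One subtlety is that Theorem \ref{Sigma1bymappingtreethm}(i) wants this for every $k \in \integers$, not just $k \geq 0$; but for $k \leq 0$ we have $A_k(\tau) \supseteq A_0(\tau)$, so generation over $A_0(\tau)$ forces generation over the larger set, and connectivity is likewise inherited. Hence $e \in \Sigma^1(\rho)$ by Theorem \ref{Sigma1bymappingtreethm}(i).

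For part (ii) I would run the dictionary in reverse. Suppose $B$ is not finitely generated over $A_k(\tau)$ for every $k \leq 0$. Fix $k \in \integers$ and a finite subcomplex $W \subseteq X$ with $\bar X_{(\tau,k,W)}$ connected; I must show the inclusion into $\bar X$ is not $\pi_1$-surjective. Replacing $k$ by $\min\{k,0\}$ only enlarges $HB_k(\tau)$ and $A_k(\tau)$ (and preserves connectivity of the intersection with $\inv q(W)$, since $\bar X_{(\tau,k,W)} \subseteq \bar X_{(\tau,k',W)}$ for $k' \leq k$ with the smaller one connected — if the original is connected so is any connected enlargement containing it; more carefully one checks the retract picture makes the larger set a subtree-with-decorations, hence connected), so it suffices to rule out surjectivity when $k \leq 0$. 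The image of $\pi_1(\bar X_{(\tau,k,W)})$ in $B = \pi_1(\bar X)$ is generated by finitely many loops, each homotopic to a product of conjugates $a s_i a^{-1}$ with $a$ ranging over the finite vertex set $\bar r(\inv q(W)) \cap A_k(\tau)$, hence over a finite subset of $A_k(\tau)$, and $s_i$ from a finite set determined by $W$; so the image is contained in the subgroup generated by $\{a s a^{-1}\}$ for a finite $S$ and $a \in A_k(\tau)$. Since $B$ is not finitely generated over $A_k(\tau)$, no such finite $S$ suffices, so this subgroup is proper, and the map is not surjective. Therefore Theorem \ref{Sigma1bymappingtreethm}(ii) applies and $e \nin \Sigma^1(\rho)$.

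The main obstacle is making the ``dictionary'' step fully rigorous: one must choose the CW models in the diagram of Theorem \ref{Sigma1bymappingtreethm} carefully enough that $\inv{\bar r}(HB_k(\tau))$ really is homotopy equivalent — via a canonical, $q$-compatible deformation retraction — to an explicit subcomplex on the vertex set $A_k(\tau)$, and that the generating loops of its fundamental group are exactly the conjugates $a s a^{-1}$; controlling how finite subcomplexes $W \subseteq X$ correspond to finite subsets $S \subseteq B$ (in both directions) is where the care is needed. Once that identification is in place, parts (i) and (ii) are formal consequences as sketched above.
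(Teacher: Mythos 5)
Your proposal is correct and takes essentially the same route as the paper: your ``dictionary'' is precisely the paper's Lemma \ref{subtreesubgrouplemmafree}, which decomposes loops in $q^{-1}(W)\cap \bar r^{-1}(T')$ into products of conjugates $usu^{-1}$, $s\in S(W)$, $u\in T'$, and then parts (i) and (ii) follow formally from Theorem \ref{Sigma1bymappingtreethm} exactly as you describe. One small inaccuracy: $\inv{q}(W)$ is the full preimage under the infinite covering $q$, so in part (ii) the conjugating elements $a$ range over all of the infinite set $A_k(\tau)$ rather than a finite vertex set; your final step invokes the correct (infinite) range together with the hypothesis that $B$ is not finitely generated over $A_k(\tau)$, so the argument is unaffected.
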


This is reminiscent of the invariant $\Sigma_B(A)$ of
\cite{BieriNeumannStrebel} and
\cite{bieristrebelvaluations}, but whereas $\Sigma_B(A)$ is determined by
the algebraic structure of $G$, our sets $A_k(\tau)$ are given by the
geometry of $T$; in particular, they are not monoids. 

Since $B$ is finitely generated over $A$, we have:

\begin{corollary}
\label{Sigma1bysameimagecor}
If for each $k \in \integers_{\geq 0}$, $\varphi(A_k(\tau)) =
\varphi(A)$, then $e \in \Sigma^1(\rho)$.
\end{corollary}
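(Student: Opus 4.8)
The plan is to deduce this directly from part (i) of Theorem \ref{Sigma1bygeneratorsthm} by producing a single finite set $S \subseteq B$ that works for every $k \ge 0$. Since $G = B \rtimes_\varphi A$ is finitely generated, Definition \ref{myactions} guarantees a finite set $S_0 \subseteq B$ with $\{\varphi_a(s) \mid a \in A,\ s \in S_0\}$ generating $B$; equivalently, $\{a s a^{-1} \mid a \in A,\ s \in S_0\}$ generates $B$, i.e.\ $S_0$ generates $B$ over the full set $A$. I claim the same $S_0$ generates $B$ over each $A_k(\tau)$ when $\varphi(A_k(\tau)) = \varphi(A)$. Indeed, $\{a s_0 a^{-1} \mid a \in A,\ s_0 \in S_0\} = \{\varphi_a(s_0) \mid a \in A\}$ depends on $a$ only through $\varphi(a)$: if $\varphi(a) = \varphi(a')$ then $\varphi_a(s_0) = \varphi_{a'}(s_0)$ for all $s_0$. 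Hence under the hypothesis $\varphi(A_k(\tau)) = \varphi(A)$, the set $\{a s_0 a^{-1} \mid a \in A_k(\tau),\ s_0 \in S_0\}$ equals $\{\varphi_a(s_0) \mid a \in A\}$, which generates $B$. So $S_0$ generates $B$ over $A_k(\tau)$ for every $k \ge 0$.

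With that claim in hand, the finite set $S = S_0$ satisfies the hypothesis of Theorem \ref{Sigma1bygeneratorsthm}(i) for the ray $\tau$: for each $k \in \integers_{\ge 0}$, $S$ generates $B$ over $A_k(\tau)$. Therefore $e \in \Sigma^1(\rho)$, which is exactly the conclusion.

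There is essentially no obstacle here; the only point requiring a moment's care is the translation between the two ways of phrasing ``finitely generated over a subset'' — the $\varphi_a$-notation of Definition \ref{myactions} versus the conjugation notation $\{a s a^{-1}\}$ of Section \ref{consequencessubsection} — and the elementary observation that the conjugation action of $a$ on $B \le G$ is precisely $\varphi_a$, so that it factors through $\varphi$. Once that identification is made explicit, the inclusion $\varphi(A_k(\tau)) = \varphi(A)$ immediately forces the two generating sets $\{a s_0 a^{-1} : a \in A_k(\tau)\}$ and $\{a s_0 a^{-1} : a \in A\}$ to coincide, and the corollary follows.
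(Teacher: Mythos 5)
Your proof is correct and matches the paper's (essentially implicit) argument: the corollary is stated right after the remark ``Since $B$ is finitely generated over $A$,'' and the intended reasoning is exactly yours --- the conjugation action of $a$ on $B$ is $\varphi_a$, hence depends only on $\varphi(a)$, so $\varphi(A_k(\tau)) = \varphi(A)$ forces the generating set over $A_k(\tau)$ to coincide with the one over $A$, and Theorem \ref{Sigma1bygeneratorsthm}(i) applies. No gaps.
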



Let $\{a_1,\dots,a_n\}$ freely generate $A$. For a generator
$a_i$, let the function $expsum_{a_i}$ map a reduced word $w$ in
$\{a_1,\dots,a_n\}^\pm$ to the
corresponding exponent sum of $a_i$ in $w$, and define the function
$expsum_{a_i^{-1}}$ to be $-expsum_{a_i}$.

\begin{corollary}
\label{boundedexpsumcor}
Let $t \in \{a_1,\dots,a_n\}^\pm$. 
Suppose there does not exist an $m \in \integers$ such that $B$
is finitely generated over $A - expsum_t^{-1}([m, \infty))$. 
(I.e., any subset $A' \subseteq A$ must have reduced words with arbitrarily 
large exponent sum of $t$ in order for $B$ to be finitely generated over 
$A'$.) Then any endpoint represented by a word eventually consisting of only
$t^{-1}$ does not lie in $\Sigma^1(\rho)$.
\end{corollary}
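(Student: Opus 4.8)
The plan is to reduce the statement to Theorem~\ref{Sigma1bygeneratorsthm}(ii). Write $\tau$ for a geodesic ray representing the given endpoint $e$; by the hypothesis on $e$ there is an $N \in \integers_{\geq 0}$ with $\tau(l+1) = \tau(l)\,t^{-1}$ for all $l \geq N$. I would first prove the claim: \emph{for every $k \in \integers_{\leq 0}$ there is an $m \in \integers$ with $A_k(\tau) \subseteq A - expsum_t^{-1}([m,\infty))$.} Granting the claim, the hypothesis of the Corollary says $B$ is not finitely generated over $A - expsum_t^{-1}([m,\infty))$; and since any set of the form $\{a s a^{-1} : s \in S,\ a \in A_k(\tau)\}$ sits inside $\{a s a^{-1} : s \in S,\ a \in A - expsum_t^{-1}([m,\infty))\}$, finite generation of $B$ over the subset $A_k(\tau)$ would force finite generation over the larger set. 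Hence $B$ is not finitely generated over $A_k(\tau)$ for every $k \leq 0$, and Theorem~\ref{Sigma1bygeneratorsthm}(ii) yields $e \nin \Sigma^1(\rho)$.

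To prove the claim I would use two elementary observations. First, $expsum_t$ factors through a homomorphism $A \to \integers$ (it is, up to sign, a coordinate of the abelianization map), and the exponent sum of a word is bounded in absolute value by the word's length; consequently $|expsum_t(a) - expsum_t(b)| = |expsum_t(a^{-1}b)| \leq d_A(a,b)$ for all $a,b \in A$, where $d_A$ is the word metric. Second, for $l \geq N$ we have $\tau(l) = \tau(N)\,t^{N-l}$, so $expsum_t(\tau(l)) = expsum_t(\tau(N)) - (l - N)$. Now fix $k \leq 0$, so that $\max\{0,k\} = 0$, and take $a \in A_k(\tau)$. By \eqref{ahorodef} there is an $l \geq 0$ with $d_A(a,\tau(l)) \leq l - k$, and combining the two observations gives
\[ expsum_t(a) \;\leq\; expsum_t(\tau(l)) + (l - k). \]
For $l \geq N$ the right-hand side equals $expsum_t(\tau(N)) + N - k$, while for the finitely many indices $l$ with $0 \leq l < N$ it is at most $C - k$, where $C := \max_{0 \leq l \leq N}\bigl(expsum_t(\tau(l)) + l\bigr)$. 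Setting $m := C - k + 1 \in \integers$, we get $expsum_t(a) < m$ for all $a \in A_k(\tau)$, that is, $A_k(\tau) \subseteq A - expsum_t^{-1}([m,\infty))$, as claimed.

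The main point --- and essentially the only step with content --- is the last estimate: along a ray running out the ``$t^{-1}$-end'' the horoball radius $l - k$ grows linearly in $l$, but this growth is cancelled exactly by the linear decay of $expsum_t(\tau(l))$, so that $\sup\{expsum_t(a) : a \in A_k(\tau)\}$ is finite. Once that cancellation is spotted there is no real obstacle; what remains is the bookkeeping above together with the routine monotonicity of the property ``$B$ is finitely generated over a subset of $A$'' under enlarging the subset.
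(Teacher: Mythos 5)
Your proposal is correct and follows the same route as the paper: show that $expsum_t$ is bounded above on each $A_k(\tau)$ (the paper quotes this as one direction of Lemma~\ref{expsumlemma}, which you instead prove inline via the Lipschitz bound $|expsum_t(a)-expsum_t(b)|\leq d_A(a,b)$ and the exact cancellation between the decay of $expsum_t(\tau(l))$ and the growth of the ball radius $l-k$), then invoke monotonicity of ``finitely generated over a subset'' and Theorem~\ref{Sigma1bygeneratorsthm}(ii). The bookkeeping and the final estimate are both sound.
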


\begin{example}
\label{lehnertexample}
This is a generalization of an example calculated by Ralf
Lehnert, although the methods used here are different from
his.  
Consider the semidirect product $G = B \rtimes A$, where $B =
\integers[\frac{1}{p_1p_2\dots p_n}]$, where $p_i \neq p_j$ are prime
for $1\leq i,j \leq n$, and $A$ is free on
$\{a_1,\dots,a_n\}$. The action is given by $a_i$ acting by multiplication by
$\frac{1}{p_i}$. For $A' \subseteq A$,  $B$ is finitely generated over
$A'$ if and only if $A'$ contains reduced words with arbitrarily large
exponent sum of each $a_i$.  
One can show that for any $k \in \integers$, this will always be the case
for $A' = A_k(\tau)$ unless $\tau$ eventually consists of only $a_i^{-1}$ 
(see Lemma \ref{expsumlemma}).
Thus, by Corollary \ref{boundedexpsumcor}, any endpoint corresponding to an
infinite word eventually consisting of $a_i^{-1}$ for some $i$ is not
in $\Sigma^1$.  By Theorem \ref{Sigma1bygeneratorsthm}, any other endpoint is in
$\Sigma^1$.
\end{example}

\begin{example} 
Let $G = \integers \wr
\integers = \oplus_{i \in \integers} \langle b_i \rangle \rtimes
\langle t \rangle$. The action is by shifting: $^tb_i = b_{i+1}$. Let
$T$ be the Cayley graph of $\langle t \rangle$, a simplicial line. The action
$\langle t \rangle \curvearrowright T$, induces an action $G
\stackrel{\rho}{\curvearrowright} T$. It is known from
previous work that $\Sigma^1(\rho)$ is empty, as follows. 
Because the endpoints of the action are fixed, we can relate
$\boundaryinf T$ to homomorphisms $G \twoheadrightarrow \integers$, and an end point
lies in $\Sigma^1(\rho)$ iff the corresponding homomorphism represents a point
of the BNSR invariant $\Sigma^1(G)$ \cite[\S 10.6]{amsmemoir}. These homomorphisms
do not represent points of
$\Sigma^1(G)$ because they are not homomorphisms associated to HNN
extension decompositions of $G$ over finitely generated base groups
\cite[Prop. 3.1]{browntrees}. Here it follows from
Theorem \ref{Sigma1bygeneratorsthm}, because $B$ is not finitely generated
over any proper subset of $\langle t \rangle$. 
\end{example}

Corollary \ref{Sigma1bysameimagecor} can be applied to determine a
nice criterion for finding endpoints of $T$ lying in
$\Sigma^1(\rho)$.

\begin{theorem}
\label{subwordsactingtriviallythm} With notation as in Theorem
\ref{Sigma1bygeneratorsthm}, viewing endpoints of $T$ as infinite words in the generators of $A$,
$\Sigma^1(\rho)$ contains any endpoint represented by an infinite word 
containing infinitely many mutually distinct subwords lying in $\kerm \varphi$.
\end{theorem}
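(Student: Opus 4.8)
The plan is to derive the stronger statement that $\varphi(A_k(\tau)) = \varphi(A)$ for every $k \in \integers_{\geq 0}$; the endpoint $e$ will then lie in $\Sigma^1(\rho)$ by Corollary \ref{Sigma1bysameimagecor}. Write the endpoint $e$ as a reduced infinite word $\tau = t_1 t_2 t_3 \cdots$ in $\{a_1,\dots,a_n\}^{\pm}$, so that the basepoint of the ray is $1 \in A$ and $\tau(l) = t_1 t_2 \cdots t_l$ is a reduced word of length $l$. The first step is a combinatorial description of $A_k(\tau)$, read off from Equation \eqref{ahorodef} and the tree structure of $T$: if $g$ is a reduced word and $j_0$ denotes the length of the longest common prefix of $g$ and $\tau$, then $d_A(g,\tau(l)) = l + |g| - 2 j_0$ for all $l \geq j_0$, and hence $g \in A_k(\tau)$ whenever $j_0 \geq \tfrac{1}{2}(|g|+k)$ (take $l = j_0$, noting that this inequality forces $j_0 \geq k$).

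The second step extracts what is needed from the hypothesis: since $A$ is finitely generated, there are only finitely many reduced words of any given bounded length, so the assumption that $\tau$ has infinitely many mutually distinct subwords lying in $\kerm\varphi$ forces $\tau$ to contain subwords in $\kerm\varphi$ of arbitrarily large length. Now fix $k \in \integers_{\geq 0}$ and an arbitrary $\gamma \in A$; it suffices to produce $g \in A_k(\tau)$ with $\varphi(g) = \varphi(\gamma)$. Choose a subword $w = t_i t_{i+1}\cdots t_j$ of $\tau$ with $w \in \kerm\varphi$ and $|w| \geq |\gamma| + k$, and choose any $l \geq j$. Put $h := \tau(i-1)\cdot t_{j+1}t_{j+2}\cdots t_l$, i.e.\ $\tau(l)$ with the block $w$ deleted; since $w \in \kerm\varphi$ this does not change the $\varphi$-image, so $\varphi(h) = \varphi(\tau(l))$, while $|h| \leq (i-1)+(l-j) = l - |w|$. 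Let $v$ be the reduced form of $h^{-1}\gamma$, so $|v| \leq |h| + |\gamma| \leq l - |w| + |\gamma| \leq l - k$, and let $g$ be the reduced form of $\tau(l)\,v$. Then $\varphi(g) = \varphi(\tau(l))\,\varphi(v) = \varphi(\tau(l))\,\varphi(h)^{-1}\,\varphi(\gamma) = \varphi(\tau(l))\,\varphi(\tau(l))^{-1}\,\varphi(\gamma) = \varphi(\gamma)$.

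It remains to verify $g \in A_k(\tau)$. Let $c \geq 0$ be the number of letters cancelled in reducing $\tau(l)\,v$; then $c \leq |v|$, the reduced word $g$ begins with $\tau(l-c)$ (a prefix of $\tau$), and $|g| = l + |v| - 2c$. So the longest common prefix of $g$ and $\tau$ has length $j_0 \geq l - c$, and by Step 1 it is enough that $l - c \geq \tfrac{1}{2}(|g|+k) = \tfrac{1}{2}(l + |v| - 2c + k)$, i.e.\ $l \geq |v| + k$, which holds since $|v| \leq l - k$. Thus $\varphi(\gamma) \in \varphi(A_k(\tau))$; as $\gamma$ and $k$ were arbitrary, $\varphi(A_k(\tau)) = \varphi(A)$ for all $k \in \integers_{\geq 0}$, and Corollary \ref{Sigma1bysameimagecor} finishes the argument.

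The single idea doing the real work is the choice in Step 2: distinctness of the subwords is used only to make their lengths unbounded, and a subword of $\tau$ in $\kerm\varphi$ that is longer than $|\gamma| + k$ provides exactly the slack needed to delete it from $\tau(l)$ and splice in a detour $v$ that simultaneously realizes the prescribed value of $\varphi$ and is short enough to keep $g$ deep inside the horoball $A_k(\tau)$. I expect the fussiest part to be not conceptual but the bookkeeping: pinning down the metric description of $A_k(\tau)$ in Step 1 and tracking the cancellation in $\tau(l)\,v$ in Step 3 — both routine in a tree.
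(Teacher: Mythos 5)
Your proof is correct and follows essentially the same strategy as the paper's: reduce via Corollary \ref{Sigma1bysameimagecor} to showing $\varphi(A_k(\tau)) = \varphi(A)$ for all $k \geq 0$, and obtain this by deleting subwords of $\tau$ lying in $\kerm\varphi$ from a long prefix so as to create the slack needed to splice in a word with arbitrary prescribed $\varphi$-image while staying inside the horoball. The only real difference is in how the hypothesis is consumed --- the paper deletes $k+l$ disjoint kernel subwords each contributing at least one letter of slack, whereas you delete a single kernel subword of length at least $|\gamma|+k$, which you extract from the infinitude of mutually distinct subwords via the observation that their lengths must be unbounded --- and both variants go through.
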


\begin{corollary} 
\label{abelianactioncor}
If $\varphi(A) \leq \autm(B)$ is abelian and $A$ has rank $n \geq 2$, then
$\Sigma^1(\rho)$ is nonempty.
\end{corollary}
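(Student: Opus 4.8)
The plan is to produce, completely explicitly, an endpoint of $T$ that Theorem~\ref{subwordsactingtriviallythm} certifies to lie in $\Sigma^1(\rho)$. Fix a free basis $\{a_1,\dots,a_n\}$ of $A$, so that (as in Theorem~\ref{Sigma1bygeneratorsthm}) $T$ is the Cayley graph of $A$ on this basis and endpoints of $T$ are infinite reduced words. Since $\varphi(A)$ is abelian we have $\varphi([x,y]) = [\varphi(x),\varphi(y)] = 1$ for all $x,y \in A$, so the commutator subgroup $[A,A]$ lies in $\kerm \varphi$; and because $n \geq 2$ this subgroup is nontrivial, which is exactly what is needed.

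Consider the infinite word $w = (a_1 a_2 a_1^{-1} a_2^{-1})^{\infty}$. It is reduced: no cancellation occurs inside a block $a_1 a_2 a_1^{-1} a_2^{-1}$, nor at the junction $a_2^{-1}\mid a_1$ between consecutive blocks. Hence $w$ traces out a geodesic ray $\tau$ and determines an endpoint $e \in \boundaryinf T$. For each $k \geq 1$, the prefix of $w$ of length $4k$ is the reduced subword $(a_1 a_2 a_1^{-1} a_2^{-1})^{k}$, which represents $[a_1,a_2]^{k} \in [A,A] \subseteq \kerm \varphi$. These prefixes have pairwise distinct lengths, so $w$ contains infinitely many mutually distinct subwords lying in $\kerm \varphi$. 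Theorem~\ref{subwordsactingtriviallythm} then gives $e \in \Sigma^1(\rho)$, so $\Sigma^1(\rho) \neq \emptyset$.

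There is no real obstacle here; the one thing to get right is the verification that $w$ is reduced, so that its kernel subwords are honest (reduced) subwords and the hypothesis of Theorem~\ref{subwordsactingtriviallythm} applies on the nose. It is worth remarking where $n \geq 2$ enters: it is precisely what makes $[A,A]$ nontrivial, hence large enough to host infinitely many distinct subwords of a single reduced infinite word. For $n = 1$ the group $A = \integers$ is abelian, $\boundaryinf T$ has only two points, and the conclusion genuinely can fail --- for instance in the lamplighter-type example $\integers \wr \integers$ discussed above, where $\varphi(\integers)$ is infinite cyclic (hence abelian) yet $\Sigma^1(\rho) = \emptyset$. One could alternatively derive the corollary from Corollary~\ref{Sigma1bysameimagecor}, by checking that $\varphi(A_k(\tau)) = \varphi(A)$ for all $k \geq 0$ when $\tau$ is the ray above (using that $\varphi(\tau(4j)) = 1$ and that $A_k(\tau)$ eventually contains arbitrarily large balls about $\tau(4j)$), but invoking Theorem~\ref{subwordsactingtriviallythm} is the most economical route.
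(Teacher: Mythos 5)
Your proof is correct and follows the paper's intended route: the paper's justification for this corollary is precisely the remark that any endpoint represented by a word containing infinitely many commutators lies in $\Sigma^1(\rho)$ (via Theorem \ref{subwordsactingtriviallythm} together with $[A,A]\subseteq\kerm\varphi$ when $\varphi(A)$ is abelian), and your ray $(a_1a_2a_1^{-1}a_2^{-1})^{\infty}$ is the natural explicit witness, with the hypothesis $n\geq 2$ entering exactly as you describe. One small stylistic point: the proof of Theorem \ref{subwordsactingtriviallythm} actually consumes \emph{disjoint} kernel subwords in its decomposition $\chi_1\zeta_1\cdots\chi_{k+l}\zeta_{k+l}$, so it is slightly cleaner to cite the infinitely many non-overlapping blocks $a_1a_2a_1^{-1}a_2^{-1}$ rather than the nested prefixes $[a_1,a_2]^{k}$ --- but your word contains both families, so the argument is unaffected.
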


For example, any endpoint represented by an infinite word containing
infinitely many commutators will be contained in $\Sigma^1(\rho)$.

\begin{example}
\label{freeandtrivialexample}
For positive integers $m \geq n$, let 
$C = \langle a_1, \dots, a_n\rangle$ and $D = \langle
a_{n+1}, \dots, a_m\rangle$ be free groups, and $A = C * D$.
For a finitely generated group $K$, let $G$ be the restricted wreath product 
$K \text{ wr}_C\ A$, where the $A$-action on the indexing set $C$
is defined by the composition of the natural projection $\pi: A
\rightarrow C$ and left multiplication. In other words, 
 $G = B \rtimes_\varphi
A$, where $\ds B = \oplus_{\omega \in C} K_\omega$ with each $K_\omega$
a copy of $K$. The elements of $B$ are
sequences $(x_\omega)$, $x_\omega \in K_\omega$, $\omega \in C$, with only 
finitely many $x_\omega$ nontrivial,  and $C$
acts on $B$ by permuting the indices (by left multiplication on
itself) while $D \leq \text{ker }
\varphi$. The projection  $G \twoheadrightarrow A$ followed by the
natural action by $A$ on its Cayley graph 
$T = \Gamma(A, \{a_1,a_2,\dots, a_m\})$
induces an action $\rho$ on $T$.

By Theorem \ref{subwordsactingtriviallythm}, any endpoint containing
infinitely many letters $a_i^\pm$, $n < i \leq m$ will lie in $\Sigma^1(\rho)$, while
Theorem \ref{boundedexpsumcor} ensures that any endpoint eventually
consisting of a single letter $a_j^\pm$, $1 \leq j \leq n$, will not lie in
$\Sigma^1(\rho)$. In fact, any end point represented by a geodesic ray that
eventually consists of only letters from $C$ lies outside
$\Sigma^1(\rho)$, as is argued in Section \ref{freeandtrivialargument}.
So an endpoint lies in $\Sigma^1(\rho)$ if and only if a
representative geodesic ray contains
infinitely many letters from $D$.
\end{example}

\begin{example}
One can also perform calculations in the case where $A$ is not free.

For example, let $H$ be any finitely generated group and consider the group
\begin{equation}
G = B \rtimes_\varphi A, \quad B = \prod_{i \in \integers} H. \quad
A= <a \mid a^4> * <b \mid b^4>,
\end{equation}

where $\varphi: A \rightarrow Aut(B)$ consists first of the projection
onto $D_\infty$ collapsing $a^2$ and $b^2$ to the identity, followed
permutation of the indices $i \in \integers$ given by
by the natural action by $D_\infty$ on $\integers$. 
Let $\rho$ be the
action by $G$ on the regular 4-valent Bass-Serre tree $T_4$ corresponding to
the free product structure of $A$.  Notice, since this is
the Bass-Serre tree corresponding to a free product, any point 
$e \in \boundaryinf T_4$ corresponds to a word in the normal form for
the free product. 
One can apply Theorem \ref{Sigma1bymappingtreethm}
to calculate $\Sigma^1(\rho)$ directly to determine that a given $e
\in \boundaryinf T_4$ if and only if it corresponds to an infinite
normal form word containing infinitely many subwords of the form $a^2$
or $b^2$. 

There is a stark similarity between this result and Theorem 
\ref{subwordsactingtriviallythm}, and indeed a statement similar to
Theorem \ref{subwordsactingtriviallythm} can be made in the case where
$A$ is a free product. However, only when $A$ is a free product of
{\em finite} groups will its corresponding Bass-Serre tree be locally
finite (and hence proper); and in this case the Kurosh subgroup theorem implies
 that $A$ has a free subgroup $A'$ of finite
index. If $G = B \rtimes A$, then $G' = B \rtimes A'$ is a finite
index subgroup of $G$, and the action $\rho$ by $G$ on the Bass-Serre 
tree corresponding to the free product decomposition of $A$ restricts
to an action by $G'$ on the same tree. It follows from \cite[Theorem
12.1]{amsmemoir} that the invariant is the same for both actions.
Hence, it is not clear that such an endeavor will add anything new to
the discussion.

\end{example}

\subsection{Defining $\Sigma^1$}
\label{sigmaonedefinitionsection}

In general, there is a family of invariants $\Sigma^n$, $n\geq 0$,
corresponding to the notion of controlled $(n-1)$-connectivity 
\cite{amsmemoir}. The
discussion below refers only to $\Sigma^1$ and controlled
connectivity, but a similar discussion can be had in full generality.

In \cite{amsmemoir}, Bieri and Geoghegan introduced:

\begin{definition}[Controlled Connectivity -- Original]
\label{origcontrolledconnectivitydef}
Let $\rho$ be an action by a finitely generated group $G$ on a proper CAT(0)
metric space $(M,d)$.  Choose a $K(G,1)$ complex $X$ whose
universal cover $\tilde X$ has a cocompact $1$-skeleton $(\tilde X)^{(1)}$,
and a continuous $G$-map $h: (\tilde X)^{(1)} \rightarrow M$.
Given a geodesic ray $\tau$ in $M$, $\tau(\infty)$ denotes the point 
of $\boundaryinf M$ represented by $\tau$.  For $t \in \reals$, let 
$\tilde X_{(\tau,t)}$ denote the largest subcomplex contained in 
$\inv{h}(HB_t(\tau))$.  Then $h$ is {\em controlled connected over
$\tau(\infty)$} if there exists $\lambda: \reals \rightarrow
[0,\infty)$ such that for all $t \in \reals$,
any two points of $\tilde X_{(\tau,t)}$ can be connected by a
path in $\tilde X_{(\tau,t-\lambda(t))}$ and $t-\lambda(t) \rightarrow
\infty$ as $t\rightarrow \infty$.
\end{definition} 

In \cite[p. 143]{bierigeoghegansl2},
they provide an ``extended'' definition, which we will
show coincides with Definition \ref{origcontrolledconnectivitydef} 
when $G$ is finitely generated.

\begin{definition}[Controlled Connectivity -- ``Extended'']
\label{controlledconnectivitydef}
Let $\rho$ be an action by a group $G$ (not necessarily finitely
generated!) on a proper $CAT(0)$
metric space $(M,d)$. Choose a non-empty free contractible $G$-CW-complex 
$\tilde X$, and a continuous $G$-map $h: \tilde X \rightarrow M$. 
Fix a geodesic ray $\tau$ in $M$.
For $t \in \reals$, define $\tilde X_{(\tau, t)}$ to be the largest
subcomplex of $h^{-1}(HB_t(\tau))$. Then $h$ is {\em controlled
connected over $\tau(\infty)$} if for every cocompact $G$-subspace $\tilde W
\subseteq \tilde X$, there exists a cocompact $G$-subspace $\tilde W'$
containing $\tilde W$ such that for all $t \in \reals$, there exists
$\lambda(t) \geq 0$ satisfying: 

\begin{itemize}

\item[($*$)] Any two points of $\tilde X_{(\tau, t)} \cap \tilde W$
can be connected by a path through $\tilde X_{(\tau, t-\lambda(t))} \cap
\tilde W'$
\item[($**$)] Any two points of $\tilde X_{(\tau, t + \lambda(t))}
\cap \tilde W$ can be connected by a path through $\tilde X_{(\tau,
t)} \cap \tilde W'$.
\end{itemize}

\end{definition}

Both Definitions \ref{origcontrolledconnectivitydef} and 
\ref{controlledconnectivitydef} are independent of choice of
$G$-space $\tilde X$ or $G$-map $h: \tilde X \rightarrow M$, as is
proved in
\cite{amsmemoir} and \cite{bierigeoghegansl2}, respectively, in what
the authors commonly refer to as the ``Invariance Theorem.'' For
Definition \ref{controlledconnectivitydef}, this 
is proved for the related concept of controlled connectivity over  
$a\in M$ \cite[Theorem 2.3]{bierigeoghegansl2}, and the authors point out
 (on p. 143) that this proof carries over to
controlled connectivity over an end point. 

The parameter $\lambda(t)$ is called a {\em lag}. 
In nice cases, $\lambda$ may be constant, or even 0.
A lag is necessary for invariance, but an
arbitrarily generous lag would defeat the point.
In Definition \ref{controlledconnectivitydef}, condition $(**)$
effectively replaces the condition that $t - \lambda(t) \rightarrow \infty$
found in Definition \ref{origcontrolledconnectivitydef}.

Suppose now that $G$ is finitely generated 
and $h: \tilde X \rightarrow M$ satisfies Definition
\ref{controlledconnectivitydef}, but $\tilde X$ has
non-cocompact $1$-skeleton. There is
$h':\tilde X'\rightarrow M$, where $\tilde X'$ has cocompact
$1$-skeleton, which by the Invariance Theorem also satisfies
Definition \ref{controlledconnectivitydef}.  We now show that 
Definition \ref{origcontrolledconnectivitydef} is satisfied by
$h'|_{(\tilde X)^{(1)}}$.

\begin{proposition}
\label{definitionscoincideprop}
Let $G$ be a finitely generated group, $\tilde X$ a contractible free 
$G$-complex with cocompact $1$-skeleton $(\tilde X)^{(1)}$, and  
geodesic ray $\tau$ in a proper CAT(0) space $M$. A $G$-map $h: \tilde X \rightarrow M$ satisfies
Definition \ref{controlledconnectivitydef}  iff the restriction $h|: (\tilde
X)^{(1)} \rightarrow M$ satisfies Definition
\ref{origcontrolledconnectivitydef}. 
\end{proposition}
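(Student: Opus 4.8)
The plan is to reduce both definitions to a statement about the $1$-skeleton of $\tilde X$ and then verify the two implications directly; the geometric content is slight, and essentially all the work is bookkeeping of lag functions.

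First I would record two elementary facts. By cellular approximation, if $Z\subseteq Z'$ are CW-subcomplexes and two points of $Z$ lie in $(Z')^{(1)}$, then any path between them in $Z'$ is homotopic rel endpoints to a path in $(Z')^{(1)}$. Also, a cell of $\tilde X$ belongs to $\tilde X_{(\tau,s)}$ (the largest subcomplex inside $h^{-1}(HB_s(\tau))$) if and only if $h$ carries its closure into $HB_s(\tau)$; since $h$ and $h|$ agree on $(\tilde X)^{(1)}$, the $1$-skeleton $(\tilde X_{(\tau,s)})^{(1)}$ coincides with the subcomplex that Definition~\ref{origcontrolledconnectivitydef} attaches to $h|$. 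I write $Y_s$ for this common $1$-complex; since horoballs are nested, $Y_{s'}\supseteq Y_s$ for $s'\le s$, and $(\tilde X_{(\tau,s)}\cap\tilde W')^{(1)}=Y_s$ whenever $\tilde W'\supseteq(\tilde X)^{(1)}$. (Note that $(\tilde X)^{(1)}$ being cocompact makes $\tilde X/G$ a $K(G,1)$ satisfying the cocompactness hypothesis of Definition~\ref{origcontrolledconnectivitydef}, so that definition applies to $h|$.)

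For the implication Definition~\ref{origcontrolledconnectivitydef} $\Rightarrow$ Definition~\ref{controlledconnectivitydef}, I would start from a lag $\lambda_0$ for $h|$, so every pair of points of $Y_t$ is joined by a path in $Y_{t-\lambda_0(t)}$ and $t-\lambda_0(t)\to\infty$. Given a cocompact $G$-subspace $\tilde W$, take $\tilde W':=\tilde W\cup(\tilde X)^{(1)}$, which is again cocompact. For each $t$, using $s-\lambda_0(s)\to\infty$, choose $S(t)\ge t$ with $s-\lambda_0(s)\ge t$ for all $s\ge S(t)$, and set $\lambda(t):=\max\{\lambda_0(t),\,S(t)-t\}$. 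Then $(*)$ holds: push each of two points of $\tilde X_{(\tau,t)}\cap\tilde W$ along its carrier cell (which lies in $\tilde X_{(\tau,t)}\cap\tilde W\subseteq\tilde X_{(\tau,t-\lambda(t))}\cap\tilde W'$) to a vertex of $Y_t$, then join the two vertices in $Y_{t-\lambda_0(t)}\subseteq Y_{t-\lambda(t)}\subseteq\tilde X_{(\tau,t-\lambda(t))}\cap\tilde W'$. And $(**)$ holds similarly: push two points of $\tilde X_{(\tau,t+\lambda(t))}\cap\tilde W$ to vertices of $Y_{t+\lambda(t)}$ (carrier cells lying in $\tilde X_{(\tau,t+\lambda(t))}\subseteq\tilde X_{(\tau,t)}$), then join them in $Y_{(t+\lambda(t))-\lambda_0(t+\lambda(t))}$, which since $t+\lambda(t)\ge S(t)$ is contained in $Y_t\subseteq\tilde X_{(\tau,t)}\cap\tilde W'$.

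For the implication Definition~\ref{controlledconnectivitydef} $\Rightarrow$ Definition~\ref{origcontrolledconnectivitydef}, I would apply Definition~\ref{controlledconnectivitydef} with $\tilde W:=(\tilde X)^{(1)}$ to obtain a cocompact $\tilde W'\supseteq(\tilde X)^{(1)}$ and a per-$t$ lag $\lambda(t)$ with $(*)$ and $(**)$; homotoping the supplied paths into $1$-skeleta, $(*)$ becomes ``every pair of $Y_t$ connects in $Y_{t-\lambda(t)}$'' and $(**)$ becomes ``every pair of $Y_{t+\lambda(t)}$ connects in $Y_t$''. Since $\lambda$ itself need not satisfy $t-\lambda(t)\to\infty$, I would pass to the optimal lag $\ell(t):=\inf\{c\ge 0:\text{every pair of }Y_t\text{ connects in }Y_{t-c}\}$, which is finite (it is at most $\lambda(t)$) and for which every $c>\ell(t)$ still works. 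Using $Y_{t'}\subseteq Y_t$ for $t'\ge t$ one checks $t\mapsto t-\ell(t)$ is non-decreasing, and $(**)$ gives $(t+\lambda(t))-\ell(t+\lambda(t))\ge t$; as $t+\lambda(t)\to\infty$ with $t$, monotonicity then forces $t-\ell(t)\to\infty$. Hence $\lambda'(t):=\ell(t)+1$ witnesses Definition~\ref{origcontrolledconnectivitydef} for $h|$. I expect this final maneuver — recovering a lag with $t-\lambda'(t)\to\infty$ by combining the monotonicity of $t-\ell(t)$ with $(**)$, which in Definition~\ref{controlledconnectivitydef} plays the role of the hypothesis ``$t-\lambda(t)\to\infty$'' — to be the main obstacle; a minor recurring point of care is ensuring each connecting path really lies in the asserted intersection of subcomplexes, which is what forces the repeated ``push along the carrier cell, then move within the $1$-skeleton'' step.
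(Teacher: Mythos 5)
Your argument follows essentially the same route as the paper's: in one direction enlarge $\tilde W'$ so that points can first be pushed into the $1$-skeleton and then rescale the lag so that $(**)$ holds; in the other, restrict to $\tilde W=(\tilde X)^{(1)}$, push paths into the $1$-skeleton by cellular approximation, and use $(**)$ to manufacture a lag with $t-\lambda'(t)\to\infty$. Your final maneuver via the infimal lag $\ell(t)$ and the monotonicity of $t-\ell(t)$ is a clean repackaging of what the paper does with an explicit sequence $s_1<s_2<\cdots$; both are correct.

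The one step that fails as written is in the forward direction: you take $\tilde W'=\tilde W\cup(\tilde X)^{(1)}$ and assert that the carrier cell of a point of $\tilde X_{(\tau,t)}\cap\tilde W$ ``lies in $\tilde X_{(\tau,t)}\cap\tilde W$.'' But $\tilde W$ is only a cocompact $G$-\emph{subspace}, not a subcomplex, so a point of $\tilde W$ interior to a $2$-cell may have its carrier cell meeting $\tilde W$ only in that point; the carrier cell does lie in the subcomplex $\tilde X_{(\tau,t)}$, but it need not lie in your $\tilde W'$, so the ``push to a vertex'' path can leave $\tilde W'$. The repair is exactly the paper's: let $Y$ be the smallest subcomplex of $\tilde X$ containing $\tilde W$ (still a cocompact $G$-set, since $\tilde W$ is cocompact and $\tilde X$ is locally cocompact as a $G$-complex) and set $\tilde W'=Y\cup(\tilde X)^{(1)}$; then the carrier cells lie in $Y\cap\tilde X_{(\tau,t)}\subseteq\tilde W'\cap\tilde X_{(\tau,t-\lambda(t))}$ and your argument goes through unchanged. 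The same adjustment should be made in your verification of $(**)$.
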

\begin{proof}

If $h|$ satisfies Definition \ref{origcontrolledconnectivitydef} over
 $\tau(\infty)$, then there is a lag $\lambda(t)$
satisfying $t - \lambda(t) \rightarrow \infty$ as $t \rightarrow
\infty$ such that for each $t$, any two points in $(\tilde
X)^{(1)}_{(\tau,t)}$ may be joined in $(\tilde X)^{(1)}_{(\tau,
t-\lambda(t))}$. 
Let $\tilde W$ be any cocompact $G$-subset of $\tilde X$. Let $Y$ be
the smallest subcomplex of $\tilde X$ containing $\tilde W$. Then $Y$ is
still a cocompact $G$-set.  Take $\tilde W' = Y \cup (\tilde X)^{(1)}$. 
Then any two points of $\tilde X_{(\tau, t)} \cap \tilde W$ may
be joined in $\tilde X_{(\tau, t- \lambda(t))} \cap \tilde W'$  
by first moving into the $1$-skeleton of $\tilde X_{(\tau, t)} \cap Y$.
We now replace $\lambda(t)$ with a lag function $\lambda'(t)$ satisfying
 both $(*)$ and $(**)$.  For any $t$, there exists $r>t$ such that for
all $s \geq r$, $s - \lambda(s) > t$. (So points of 
$(\tilde X)^{(1)}_{[\tau,s)}$ can be connected through a path in $(\tilde
X)^{(1)}_{[\tau, t)}$.)
Let $\lambda'(t) = \max\{\lambda(t), r-t\}$. 

Now suppose $h$ satisfies Definition \ref{controlledconnectivitydef} 
over $\tau(\infty)$.  For $\tilde W = \tilde W' = (\tilde X)^{(1)}$, 
there is $\lambda:\reals\rightarrow [0,\infty)$ such that 
by $(*)$, 
any two points of $\tilde X_{(\tau,t)} \cap (\tilde X)^{(1)}$ may
 be joined through a path in $\tilde X_{(\tau,t-\lambda(t))} \cap (\tilde
X)^{(1)}$, since a path may be chosen which does not leave $(\tilde X)^{(1)}$.
We now find a lag $\lambda'(t)$ satisfying $t - \lambda'(t) \rightarrow \infty$.
Since $HB_s(\tau) \subseteq HB_r(\tau)$ when $s>r$, $(**)$ says that 
for all $r \in \reals$, for all $t > r + \lambda(r)$, a lag of $(t-r)$
suffices for $HB_t(\tau)$. Hence, we may choose a real-valued sequence 
$s_1 < s_2 < \dots$ satisfying $s_n \rightarrow \infty$ and for 
$t \in [s_n, s_{n+1})$ a lag of $t - n$ suffices. 
Define $\lambda'(t)$ by:
\[ \lambda'(t) = \left\{\begin{array}{ccl} 
\lambda(t) & \text{ if } & t < s_1 \\
t - n & \text{ if } & s_n \leq t < s_{n+1},\ n = 1,2,\dots \\
\end{array}\right.\]
Then $ t -  \lambda'(t) = n$ when $s_n \leq t < s_{n+1}$, so 
$t - \lambda'(t) \rightarrow \infty$ as $t\rightarrow \infty$.
\end{proof}

This means that one may test for controlled connectivity of a finitely
generated group in the traditional sense by applying the more general
definition with a space $\tilde X$, even when $(\tilde X)^{(1)}$ is
not cocompact.

\begin{definition}[$\Sigma^1$]
The Invariance Theorem ensures controlled connectivity is a property
of the action $\rho$, so we define
\[\Sigma^1(\rho) = \{ e \in \boundaryinf M
\mid \rho \textrm{ is controlled connected over } e\} \]
\end{definition}

The action $\rho$ induces an action on $\boundaryinf M$, and under
this action $\Sigma^1(\rho)$ is a $G$-invariant set.

\section{Covering Spaces and Bass-Serre Theory}
\subsection{Some facts about covering spaces.}
\label{coveringspacesec}

The following proposition counts the number of components over a
connected subset in a covering projection. 

\begin{proposition}[Theorem 3.4.10 of \cite{geogheganbook}]
\label{preimagedisconnectedprop}
Let $(X, Z)$ be a pair of path connected CW complexes, both containing
a point $z$. Let $i: (Z, z) \rightarrow (X, z)$ be the inclusion
map, and let $p: (\bar X, \bar z) \rightarrow (X, z)$ be a covering
projection. Let $H_1 = im\ p_\#$ and $H_2 = im\ i_\#$. Then the number of
path components of $p^{-1}(Z)$ equals the order of the set of double cosets
\[ \{H_1gH_2 \mid g \in \pi_1(X,z) \}. \]

In particular, if $\bar X = \tilde X$ is the universal cover of $X$, then the
number of components of $p^{-1}(Z)$ is the index of $H_2$ in
$\pi_1(X,z)$.
\end{proposition}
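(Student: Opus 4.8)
The plan is to relate path components of $p^{-1}(Z)$ to double cosets via the action of $\pi_1(X,z)$ on the fibers, using the standard correspondence between covering spaces and $\pi_1$-sets. First I would recall the basic setup: $p^{-1}(Z) \to Z$ is a covering projection of the path connected space $Z$, so its path components are precisely its connected components, and these are themselves connected covers of $Z$. Fix the fiber $F = p^{-1}(z)$. The key observation is that two points $\bar z_1, \bar z_2 \in F$ lie in the same path component of $p^{-1}(Z)$ if and only if they can be joined by a path in $p^{-1}(Z)$, equivalently (since such a path projects to a loop in $Z$ at $z$ after possibly concatenating, but more directly) if and only if $\bar z_2$ is the endpoint of the lift, starting at $\bar z_1$, of some loop in $Z$ based at $z$. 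So the path components of $p^{-1}(Z)$ are in bijection with the orbits of the $\pi_1(Z,z)$-action on $F$, where this action factors through $i_\#: \pi_1(Z,z) \to \pi_1(X,z)$ composed with the monodromy action of $\pi_1(X,z)$ on $F$.

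Next I would identify the monodromy action of $\pi_1(X,z)$ on the fiber $F$ with the action on the coset space $H_1 \backslash \pi_1(X,z)$, where $H_1 = \mathrm{im}\, p_\#$: this is the fundamental theorem of covering space theory, picking $\bar z$ as basepoint so that the stabilizer of one point of $F$ is exactly $H_1$. Under this identification, restricting the action to the subgroup $H_2 = \mathrm{im}\, i_\#$ (the image of $\pi_1(Z,z)$) means the orbits of $H_2$ on $H_1 \backslash \pi_1(X,z)$ are exactly the double cosets $H_1 g H_2$. Hence the number of path components of $p^{-1}(Z)$ equals the number of such double cosets, which is the claim.

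For the "in particular" statement, when $\bar X = \tilde X$ is the universal cover we have $H_1 = \{1\}$, so the double cosets $H_1 g H_2 = g H_2$ are just the right cosets of $H_2$, and their number is the index $[\pi_1(X,z) : H_2]$.

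I expect the main obstacle to be the careful bookkeeping in the first step — verifying that the path-component relation on $F$ coincides exactly with the $\pi_1(Z,z)$-orbit relation, and in particular checking that lifting a loop in $Z$ (rather than an arbitrary path) captures all of path connectivity within $p^{-1}(Z)$. The subtlety is that a path in $p^{-1}(Z)$ joining two fiber points need not itself be a lift of a loop at $z$, but since $Z$ is path connected one can homotope/reparametrize so that it is, or more cleanly: restrict attention to how path components of $p^{-1}(Z)$ meet the fiber $F$ (every path component does, since $p^{-1}(Z)\to Z$ is a covering and $Z$ path connected), reducing everything to the monodromy action. Once that reduction is in place, the rest is the standard double-coset formula for covering spaces, and the cited Theorem 3.4.10 of \cite{geogheganbook} does exactly this — so in the write-up I would simply invoke it and then extract the universal-cover case.
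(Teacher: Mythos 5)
Your argument is correct and is the standard monodromy/double-coset proof of this statement; the paper itself offers no proof, simply citing Theorem 3.4.10 of Geoghegan's book, which is established by exactly the reduction you describe (every path component of $p^{-1}(Z)$ meets the fiber, and components correspond to orbits of $H_2$ acting on $H_1\backslash\pi_1(X,z)$). The ``subtlety'' you flag is in fact automatic: a path in $p^{-1}(Z)$ between two fiber points projects to a loop at $z$ and is therefore itself the lift of that loop, so no homotopy or reparametrization is needed.
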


For us the interesting case for us will be when $Z$ has
connected preimage in $\tilde X$. With this in mind, we will say $Z$ is
{\em $\pi_1$-surjective} when the inclusion $(Z,z) \hookrightarrow
(X,z)$ induces a surjection on $\pi_1$.

A second fact we will need is a consequence of path lifting:

\begin{proposition}
\label{componentsurjectlemma}
Let $(X,Z)$ be a pair of path connected CW complexes. Let $p: \bar X
\rightarrow X$ be a covering projection. Then each component of
$p^{-1}(Z)$ surjects onto $Z$.
\end{proposition}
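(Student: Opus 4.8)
The plan is to use the path-lifting property of covering projections together with the path-connectedness of $Z$. First I would recall that CW complexes are locally path connected, so the components of $p^{-1}(Z)$ coincide with its path components; it therefore suffices to show that each path component $C$ of $p^{-1}(Z)$ satisfies $p(C) = Z$. The inclusion $p(C) \subseteq Z$ is immediate, so the content is the reverse inclusion.

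Fix a path component $C$ and a point $\bar z \in C$, and set $z = p(\bar z) \in Z$. Let $z' \in Z$ be arbitrary. Since $Z$ is path connected, choose a path $\gamma \colon [0,1] \to Z$ with $\gamma(0) = z$ and $\gamma(1) = z'$. Composing with the inclusion $Z \hookrightarrow X$ and applying the path-lifting property of the covering $p$, I obtain a (unique) lift $\bar\gamma \colon [0,1] \to \bar X$ with $\bar\gamma(0) = \bar z$ and $p \circ \bar\gamma = \gamma$. Because $p(\bar\gamma(t)) = \gamma(t) \in Z$ for every $t$, the path $\bar\gamma$ lies entirely in $p^{-1}(Z)$; since it starts in $C$, its image is contained in the path component $C$. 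In particular $\bar\gamma(1) \in C$ is a point with $p(\bar\gamma(1)) = z'$, so $z' \in p(C)$. As $z'$ was arbitrary, $Z \subseteq p(C)$, which together with the trivial inclusion gives $p(C) = Z$.

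The argument requires no hypothesis that $\bar X$ be connected — it is carried out one component of $p^{-1}(Z)$ at a time — and the only nontrivial input is the path-lifting property, which is standard for covering projections. So there is no real obstacle; I would only take care to note the passage from ``component'' to ``path component'' (valid since CW complexes are locally path connected) and to observe that a lift of a path in $Z$ automatically stays in $p^{-1}(Z)$.
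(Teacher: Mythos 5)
Your proof is correct and is exactly the path-lifting argument the paper has in mind — the paper states this proposition without proof, introducing it only as ``a consequence of path lifting,'' and your write-up simply fills in the standard details (lifting a path in $Z$ from a chosen point of a component, noting the lift stays in $p^{-1}(Z)$ and hence in that component). The care you take over ``component'' versus ``path component'' via local path-connectedness of CW complexes is appropriate and matches the intended reading.
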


\subsection{Bass-Serre theory via covering spaces}
\label{bstheorysec}
We are concerned with cocompact actions by finitely generated
groups on locally finite simplicial trees, particularly those without
global fixed points. Thus all actions we consider can be
understood though Bass-Serre theory \cite{basscoveringtheory},
\cite{serretrees}. There is a beautiful connection between Bass-Serre
theory and Covering Space theory \cite[\S 6.2]{geogheganbook} and
\cite{scottandwall}, which we take advantage of
in order to calculate $\Sigma^1$ for actions as described by
Definition \ref{myactions}.
Here we briefly recount this topological construction of the Bass-Serre
tree in the context of such actions, and in the process introduce 
an intermediary covering space which will be important for calculations.

Given an action $\rho$ as in Definition \ref{myactions}, set 
$V = G \backslash T$, a finite graph since $\rho$ is cocompact.
 Fix a base vertex $v_0$ of $V$. 
Choose a connected fundamental domain $F$ for $\rho$, and let $\mathcal V$
be the system of stabilizers for $F$. (Here a
fundamental domain is not a subgraph if $V$ has loops.) Let $\bar v_0$
be the vertex of $F$ over $v_0$.
Let $\mathbb{V} = (V,
\mathcal V, v_0)$ be the corresponding graph of groups associated with
$\rho$.

For a cell (vertex or edge) $c$ of $V$, the stabilizer $G_c \in
\mathcal V$ is of the form $B \rtimes A_c$ (where $A_c \leq A $ is the
stabilizer of $c$ under the action by $A$). Following Remark
\ref{nonfgstabilizersremark}, we assume $G_c$ is not finitely
generated. Let $R_c$ be a finite generating set for $A_c$, and let
$S_c$ be an infinite generating set of $B$ which contains a finite set $S$
such that $S$ generates $B$ over $A$, as described in Definition
\ref{myactions}. 
Let $X_c$ be a $K(G_c, 1)$-complex having a single 0-cell and 1-cells
in correspondence with $R_c \cup S_c$ \cite[Ch. 7]{geogheganbook};
this is called a ``vertex (or edge) space,'' depending on whether $c$
is a vertex or edge.
There is covering space $\bar X_c \twoheadrightarrow X_c$ which is
a $K(B,1)$, since $B \leq G_c$. 

As in \cite[Theorem 7.1.9]{geogheganbook}, 
we assemble a $K(G,1)$-complex $(X,x_0)$ as a total space for the
graph of groups $(V,\mathcal V, v_0)$. This is formed as a disjoint
union of the vertex spaces $X_v$, to which we attach
$X_e \times I$ for each edge $e$. 
The attaching maps are such that the induced maps on $\pi_1$ induce
inclusions $G_e \hookrightarrow G_v$ when $v$ is an endpoint of $e$.
 There is a retraction $r: (X,
x_0) \rightarrow (V, v_0)$ collapsing $X_c$ (or $X_c\times I$ if $c$
is an edge) to $c$ for each cell $c$
of $V$. There is a covering space $q: (\bar X, \bar
x_0)\rightarrow (X, x_0)$ corresponding to $B$. This, too, can be
described as a total space of a graph of groups where the graph is the
tree $T$ itself, and each stabilizer is isomorphic to $B$, since $T =
B\backslash T$. 
 
We then have the universal cover
$p:(\tilde X, \tilde x) \twoheadrightarrow (\bar X, \bar x)$. Above
the map $r$, are maps $\bar r: (\bar X, \bar x_0) \rightarrow (T, \bar
v_0)$ and $\tilde r: (\tilde X, \tilde x_0) \rightarrow (T, \bar
v_0)$.

All maps are $G$-equivariant and continuous. 
We arrive at the commutative diagram given before the statement of Theorem
\ref{Sigma1bymappingtreethm}.

\section{Analysis of $\Sigma^1$ via subcomplexes of $\bar X$}
\label{analysissec}
We continue using the notation of the previous section.

\begin{remark}
\label{useintegersremark}
Let the end point $e$ be represented by the geodesic ray $\tau$. 
Because $\tau$ emanates from a vertex, the horoball $HB_t(\tau)$ is a
subtree of $T$ if and only if $t \in \integers$. We are interested in 
$\tilde X_{(\tau, t)} \subset \tilde X$, which is by definition the largest subcomplex of $\inv{(\bar r \circ
p)}(HB_t(\tau))$; and by choice of $\tau$, $X$, $\bar r$, and
$p$, $\tilde X_{(\tau, t)} = \inv{(\bar r \circ p)}(HB_t(\tau))$ exactly
when $t \in \integers$. (There are no 0-cells of $\bar X$
mapped by $\bar r$ to the interior of an edge of $T$.) 
Hence, it is enough to look at horoballs
of the form $HB_k(\tau)$, $k \in \integers$. Similarly, the lag
$\lambda$ can always be taken to be in $\integers$, so that all
horoballs under consideration are subtrees of $T$.

\end{remark}

\begin{definition}
\label{suitablesubcomplexdef}
A finite subcomplex $W$ will be called {\em suitable} if for each
subtree $U$ of $T$, the set $\inv{\bar r}(U) \cap \inv{q}(W)\subset
\bar X$
is connected. By Remark \ref{useintegersremark}, it follows that if
$W$ is suitable, then for
any horoball $HB_k(\tau)$, the set 
$\bar X_{(\tau,k,W)} = \inv{\bar r}(HB_k(\tau)) \cap  \inv{q}(W)$ is
connected.
\end{definition}

\begin{lemma}
\label{intersectionwithhoroballlemma}
Suppose $W$ is a connected subcomplex of $X$ such that for each vertex
$v$ of $F$, $W$ contains the 1-cells of $X_v \subset X$ corresponding
to $R_v$. Moreover, for each edge $e$ of $F$, let $x_e \in X_e$ be the
basepoint, and suppose $W$ contains the 1-cell 
$\{x_e\} \times [0,1] \in X$.  Then $W$ is suitable. 
\end{lemma}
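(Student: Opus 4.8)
The plan is to show directly that $\inv{\bar r}(U) \cap \inv q(W)$ is connected for every subtree $U \subseteq T$, which is exactly the definition of suitability. I would work with the graph-of-groups description of $\bar X$ over $T$ recalled in Section \ref{bstheorysec}: $\bar X$ is a total space of a graph of groups whose underlying graph is $T$, with every vertex/edge stabilizer isomorphic to $B$. Under this description $\inv{\bar r}(U)$ is precisely the sub-total-space of $\bar X$ sitting over the subtree $U$, built from copies $\bar X_v$ of the $K(B,1)$ vertex spaces for $v \in U$ glued along $\bar X_e \times I$ for edges $e$ of $U$. So the content of the lemma is: intersecting this with $\inv q(W)$ still leaves something connected, given the hypotheses on $W$.

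The key steps, in order. First, I would fix notation: write $Y_U := \inv{\bar r}(U)$ and $Y_U^W := Y_U \cap \inv q(W)$, and note $Y_U^W = \bigcup_{v \in U}\big(\bar X_v \cap \inv q(W)\big) \;\cup\; \bigcup_{e \subseteq U}\big((\bar X_e \times I) \cap \inv q(W)\big)$. Second, I would analyze the piece over a single vertex $v$: the hypothesis that $W$ contains all $1$-cells of $X_v$ corresponding to $R_v$ should force $\inv q(W) \cap \bar X_v$ to be connected, using Proposition \ref{componentsurjectlemma} (each component of $q^{-1}$ of a connected set surjects) together with the fact that $R_v$ generates $A_v$, so that the $1$-skeleton-type subcomplex over $R_v$ already carries enough of $\pi_1$ inside $X_v$ for the preimage of $W\cap X_v$ in the $B$-cover to be $\pi_1$-surjective / connected by Proposition \ref{preimagedisconnectedprop}. (Here is where I'd be careful: it is the $A_v$-direction, not the $B$-direction, that matters, since $q$ is the $B$-cover and $B \leq \ker(G_v \to A_v)$.) Third, the analogous statement over an edge $e$: the hypothesis that $W$ contains the $1$-cell $\{x_e\}\times[0,1]$ guarantees the edge-piece $(\bar X_e \times I)\cap \inv q(W)$ is nonempty and connects the copy of (a piece of) $\bar X_e$ at each end, i.e. it links the vertex pieces at the two endpoints of $e$ inside $Y_U^W$. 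Fourth, I would assemble: since $U$ is a tree, it is connected and simply connected, so I can build $Y_U^W$ up one vertex/edge at a time along $U$; at each stage a connected vertex-piece is attached to the connected partial union along a nonempty connected edge-piece, so connectivity is preserved. A Mayer–Vietoris-style / van Kampen-style induction on finite subtrees, then a direct-limit argument for infinite $U$ (a path between two points lies in a finite subtree), finishes it. Finally, invoke Remark \ref{useintegersremark} to get the horoball statement as the stated corollary — though that is already folded into Definition \ref{suitablesubcomplexdef}, so really only the "every subtree" claim needs proof.

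The main obstacle I expect is Step 2–3: pinning down \emph{exactly} why the stated $1$-cells of $W$ are enough to make each vertex- and edge-piece of the $B$-cover connected. The subtlety is that $\bar X_v \to X_v$ is the covering corresponding to $B \leq G_v = B \rtimes A_v$, so a component of $\inv q(W)\cap \bar X_v$ surjects onto $W \cap X_v$ (by Prop.~\ref{componentsurjectlemma}) and the number of components is governed by the image of $\pi_1(W\cap X_v) \to \pi_1(X_v) \to A_v$ (by Prop.~\ref{preimagedisconnectedprop}); the $R_v$-cells are included precisely so that this image is all of $A_v$, forcing a single component. One has to check that $W$ being a connected subcomplex of $X$ containing those cells does give $\pi_1(W \cap X_v) \twoheadrightarrow A_v$ — this uses that $X_v$ has a single $0$-cell and that the $R_v$-loops generate $A_v$ — and similarly that the included edge $1$-cells make the gluing loci nonempty and connected in the cover. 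Once those local statements are in hand, the global induction over the tree $U$ is routine.
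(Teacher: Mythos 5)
Your proposal is correct and follows essentially the same route as the paper's proof: connectivity of each vertex piece via the double-coset count of Proposition \ref{preimagedisconnectedprop} (using that the $R_v$-loops carry $\pi_1(W\cap X_v)$ onto $A_v$ and $G_v = BA_v$), the base-point edge $1$-cells to join adjacent vertex pieces, and the observation that no cell of $\bar X$ lies wholly over the interior of an edge of $T$. The paper's argument is simply a terser version of your steps 2--4, without the explicit finite-subtree induction.
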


\begin{proof}
Let $U$ be a subtree of $T$. We show that $q^{-1}(W)
\cap \bar r^{-1}(U)$ is connected.  For a given vertex $v$ of $F$, 
$W$ contains loops generating $A_v$, and the image of the map $\bar
X_v \hookrightarrow X_v$ is $B$. By Proposition
\ref{preimagedisconnectedprop} (with $H_1 \geq A_v$ and $H_2 = B$),
$q^{-1}(W) \cap \bar X_v$ is connected. Hence the lemma
holds if $U$ is any vertex of $T$. If $U$ contains edges, then since
$W$ contains all edges of $X$ corresponding to
base points of $X_e$, $e\in F$, there must be a path in
$q^{-1}(W) \cap \bar r^{-1}(U)$ from the $\bar r$-preimage of any one
vertex of $U$ to any other. Furthermore, the fact that there is no
cell of $\bar X$ lying completely over the interior of an edge of $T$ ensures
that there can be no components of $q^{-1}(W) \cap \bar r^{-1}(U)$
over the interior of an edge. 
\end{proof}

Because each stabilizer $A_v$ is finitely generated and $V$ is finite,
the following observation follows from Lemma
\ref{intersectionwithhoroballlemma}.
\begin{observation}
\label{nicecomplexlemma}
If $W \subseteq X$ is compact, then there exists a suitable subcomplex
$W' \subseteq X$ such that $W \subseteq W'$.
\end{observation}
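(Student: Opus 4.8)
The plan is to reduce directly to the hypotheses of Lemma~\ref{intersectionwithhoroballlemma}, enlarging $W$ by only finitely many cells. First I would replace the compact set $W$ by a finite subcomplex containing it: since $X$ is a CW-complex, $W$ meets only finitely many open cells, and the union of the closures of those cells is a finite subcomplex of $X$ containing $W$. So there is no loss in assuming from the start that $W$ is a finite subcomplex.

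Next I would adjoin to $W$ the following finite collection of cells. For each vertex $v$ of the fundamental domain $F$, throw in the $1$-cells of $X_v \subset X$ corresponding to the generating set $R_v$ of $A_v$, together with the $0$-cell of $X_v$; and for each edge $e$ of $F$, throw in the $1$-cell $\{x_e\}\times[0,1]$ together with its two endpoints. Since $V$ is finite there are only finitely many such $v$ and $e$, and since each $A_v$ is finitely generated each $R_v$ is finite, so altogether this is a finite set of cells and $W$ together with these cells is again a finite subcomplex. This is precisely where the hypotheses ``$V$ finite'' and ``each $A_v$ finitely generated'' are used.

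The only remaining point is connectivity, since Lemma~\ref{intersectionwithhoroballlemma} requires $W'$ connected. Because $X$ is path connected its $1$-skeleton $X^{(1)}$ is connected; picking one $0$-cell in each of the finitely many path components of the finite subcomplex built above and joining them by edge-paths in $X^{(1)}$ yields a finite connected subcomplex $C \subseteq X^{(1)}$ meeting every component. Taking $W'$ to be the union of $W$, the cells listed above, and $C$, we get a finite connected subcomplex of $X$ with $W \subseteq W'$ satisfying all the hypotheses of Lemma~\ref{intersectionwithhoroballlemma}, hence $W'$ is suitable. I do not anticipate any genuine obstacle here; the content is entirely in checking that each enlargement step stays finite, which the standing finiteness hypotheses guarantee.
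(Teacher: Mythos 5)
Your proposal is correct and is exactly the argument the paper intends: the paper dispenses with the Observation in one sentence (``Because each stabilizer $A_v$ is finitely generated and $V$ is finite, the following observation follows from Lemma \ref{intersectionwithhoroballlemma}''), and your write-up simply fills in the routine details of that reduction --- passing to a finite subcomplex containing the compact set, adjoining the finitely many cells required by Lemma \ref{intersectionwithhoroballlemma}, and restoring connectivity via edge-paths in the $1$-skeleton.
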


For convenience, we restate Theorem \ref{Sigma1bymappingtreethm}
before proving it. Recall that $\bar X_{(\tau, k, W)}$ denotes 
$\inv{\bar r}(HB_k(\tau)) \cap \inv{q}(W) \subset \bar X$.\\\

\noindent
{\bf Theorem \ref{Sigma1bymappingtreethm}. }{\em
Let $e \in \boundaryinf T$ be represented by a geodesic ray $\tau$.

\begin{enumerate}[(i)]
\item If there exists a finite subcomplex $W \subset X$ 
such that for every $k \in \integers$,  $\bar X_{(\tau, k, W)}$
is connected and the map on $\pi_1$ induced by the inclusion $\bar
X_{(\tau, k, W)} \hookrightarrow \bar X$ is surjective,
then $e \in \Sigma^1(\rho)$.

\item If for every $k \in \integers$ 
and every finite subcomplex $W \subset X$ such that $\bar X_{(\tau, k, W)}$
is connected, the induced map on $\pi_1$ is not surjective, then $e \nin \Sigma^1(\rho)$.
\end{enumerate}}

\begin{proof}[Proof of Theorem \ref{Sigma1bymappingtreethm}]
\ \

\begin{enumerate}[(i)]
\item 
We show that Definition \ref{controlledconnectivitydef} is satisfied with lag $\lambda = 0$;
in this case, conditions $(*)$ and $(**)$ are the same.
Let $\tilde L \subseteq \tilde X$ be a cocompact $G$-subcomplex and
set $L = q(p(\tilde L))$. Let $k \in \integers$. By Observation
\ref{nicecomplexlemma}, 
there is a suitable subcomplex $W' \subseteq X$ with $L \cup W
\subseteq W'$. Since $\bar X_{(\tau, k, W)}$ is $\pi_1$-surjective
onto $\bar X$, it follows that $\bar X_{(\tau, k, W')}$ is as well.
Because $W'$ is suitable,
Proposition \ref{preimagedisconnectedprop} applies to $\bar X_{(\tau,
k, W')} \subset \bar X$ to ensure that
$\inv{p}(\inv{q}(W')) \cap \tilde X_{(\tau, k)}$ is connected.
 Moreover this contains $L \cap
\tilde X_{(\tau, k)}$, so condition $(*)$ is satisfied.

\item Let $\tilde L$ be a cocompact $G$-subcomplex of $\tilde X$, and
let $\tilde L'$ be any cocompact $G$-subcomplex of $\tilde X$
containing $\tilde L$. We show that for any lag $k \geq 0 \in
\integers$, there exist points of $\tilde L \cap \tilde X_{(\tau, 0)}$
lying in distinct components of $\tilde L' \cap \tilde X_{(\tau,
-k)}$. 

Let $L = p(q(\tilde L))$ and $L' = p(q(\tilde L'))$. By Observation
 \ref{nicecomplexlemma}
 there exists a suitable complex $W \subseteq X$ with $L' \subseteq
W$.  Then $\bar X_{(\tau, -k,
W)}$ is connected, and by assumption it is not $\pi_1$-surjective.
Set $\tilde W = \inv{q}(\inv{p}(W))$. Then $\tilde W \cap \tilde
X_{(\tau, -k)}$ is disconnected by Proposition
\ref{preimagedisconnectedprop}. Furthermore, Proposition
\ref{componentsurjectlemma} ensures that each of its components
contains components of $\tilde L' \cap \tilde X_{(\tau, -k)}$, which in
turn contain points of $\tilde L \cap \tilde X_{(\tau, -k)}$.

\end{enumerate}
\end{proof}

\section{$A$ a free group}
\label{freegpsection}
Let the action $\rho$ by $G$ on $T$ be as defined in Definition
\ref{myactions}, with the additional restriction that $A$ is a free
group on the set $\{a_1,\dots, a_n\}$ and $T$ is its Cayley graph
with respect to this set.
Then the vertices of $T$ are the elements of $A$. 
Let $X$, $q: (\bar X, \bar x_0) \rightarrow (X,\bar x)$, and $p:
(\tilde X, \tilde x_0) \rightarrow (\bar X, \bar x_0)$,
$r: X \rightarrow V$, and $\bar r: \bar X \rightarrow T$ be as defined
in Section \ref{bstheorysec}.  The graph $V = A \backslash
T$ has a unique vertex $v_0$, so the $K(G,1)$-complex $X$ can be
chosen to have a unique 0-cell $x_0$, which we naturally choose as
basepoint for $X$.  In this case, for any cell $c$ of $V$, $X_c$ and
$\bar X_c$ are both $K(B,1)$-complexes. In fact, we can take $\bar X_c
= X_c = X_{v_0}$ for all $c$, since passing from $X$ to $\bar X$ simply 
``unwraps'' loops in
$A \subseteq G = \pi_1(X, x_0)$.  Choose the base point $\bar x_0$ of
$\bar X$ to be the unique 0-cell of $\bar X$  mapped to $1 \in A =
\textrm{vert } T$. 

We uniquely represent $\boundaryinf T$ by geodesic
rays $\tau$, with $\tau(0) = 1 \in A$ and $\tau(n)$  a freely reduced
word on $n$ letters. Thus each geodesic ray $\tau$ corresponds to a
unique infinite freely reduced word $\prod_{i \in \integers_{\geq 0}} c_i$.

\subsection{From suitable complexes to subgroups}

From here on, we identify $B$ with $\pi_1(\bar X, \bar x_0)$.
Let $W$ be a suitable subcomplex of $X$. Since $W$ is finite, the
subgroup
\[ B(W) = inclusion_\#(q^{-1}(W) \cap \bar r^{-1}(1), \bar x_0) \leq B \]
is finitely generated. Let $S(W)$ be a finite generating set for
$B(W)$.  Let $T'$  be a subtree of $T$.
Fix $v \in \text{vert } T' \subseteq A$. Then $\bar r^{-1}(v)
\cong \bar X_c$ has a single 0-cell; call it $x'$.
Let $B(W,T',v)$ be the image of \mbox{$\pi_1(q^{-1}(W)
\cap \bar r^{-1}(T'), x')$} in $\pi_1(\bar X, x')$.  Let 
\[\Psi_v: \pi_1(\bar X, x') \rightarrow \pi_1(\bar X, \bar x_0) =B\]
be the change of basepoint isomorphism. Then for $g\in \pi_1(\bar X, x')$,
 $\Psi_v(g) = vgv^{-1}$.

\begin{lemma}
\label{subtreesubgrouplemmafree}
$\Psi_v(B(W, T', v))$ is the subgroup of $B$ generated by 
$\{ usu^{-1} \mid s \in S(W), u \in T' \}$.
\end{lemma}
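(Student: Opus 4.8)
The plan is to compute the image $\Psi_v(B(W,T',v))$ by decomposing the relevant subcomplex of $\bar X$ along the tree $T'$ and tracking how loops in it map under $\bar r$ and under the change-of-basepoint isomorphism. First I would recall the structure of $\bar X$ from Section \ref{bstheorysec}: as a total space of a graph of groups over the tree $T$, $\bar X$ is built from copies $\bar r^{-1}(w) \cong \bar X_{v_0}$ of a single $K(B,1)$ (one for each vertex $w \in \text{vert }T = A$), glued along mapping cylinders of edge spaces; since each edge space is also a $K(B,1)$ equal to $\bar X_{v_0}$, the attaching maps are (up to homotopy) identity maps. Consequently $\bar r^{-1}(T')$ is homotopy equivalent to $\bar X_{v_0}$, and $\pi_1(\bar r^{-1}(T'), x')$ is naturally identified with $B$. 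The key point is then to understand the image in this $\pi_1$ of the subcomplex $q^{-1}(W) \cap \bar r^{-1}(T')$, vertex space by vertex space.

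The main computation: for each vertex $u \in \text{vert }T'$, the slice $q^{-1}(W) \cap \bar r^{-1}(u)$ sits inside $\bar r^{-1}(u) \cong \bar X_{v_0}$, and by the very definition of $B(W)$ and the fact that $X_c = \bar X_c = X_{v_0}$ for every cell $c$, this slice has fundamental group (based at the unique $0$-cell of $\bar r^{-1}(u)$) equal to a conjugate of $B(W) = \langle S(W)\rangle$. More precisely, the change-of-basepoint isomorphism $\pi_1(\bar X, x_u) \to \pi_1(\bar X, \bar x_0) = B$ (where $x_u$ is the $0$-cell over $u$) carries this slice subgroup to $u\, B(W)\, u^{-1} = \langle usu^{-1} \mid s \in S(W)\rangle$; this follows because $\bar r$ is $G$-equivariant (in fact $B$-equivariant since $B$ is in the kernel of $\rho$, and the relevant translation carries $\bar r^{-1}(1)$ to $\bar r^{-1}(u)$ via the element $u \in A$ acting on $\bar X$), so the slice over $u$ is literally the $u$-translate of the slice over $1$. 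Since $W$ is suitable, $q^{-1}(W) \cap \bar r^{-1}(T')$ is connected, and the edge-space $1$-cells that $W$ contains (by Lemma \ref{intersectionwithhoroballlemma}, built into suitability) provide, for adjacent vertices $u, u'$ of $T'$, a path in $q^{-1}(W) \cap \bar r^{-1}(T')$ from $x_u$ to $x_{u'}$ that maps under $\bar r$ to the edge $[u,u']$ and carries trivial $\pi_1$-data (it lies over a single edge-mapping-cylinder, which deformation retracts onto its endpoint slice). Therefore a van Kampen / tree-of-spaces argument shows $\pi_1(q^{-1}(W)\cap \bar r^{-1}(T'), x')$ surjects onto the subgroup of $\pi_1(\bar X, x')$ generated by all the slice subgroups over vertices of $T'$ together with these trivial connecting paths, i.e. $B(W, T', v)$ is generated by the various $v^{-1}(u\,B(W)\,u^{-1})v$-type contributions; applying $\Psi_v$ (conjugation by $v$) then yields exactly $\langle usu^{-1} \mid s \in S(W),\, u \in T'\rangle$.

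The step I expect to be the main obstacle is the bookkeeping in the van Kampen argument over the tree $T'$: one must verify that no extra relations or extra generators appear, i.e. that the connecting paths between adjacent slices really do contribute nothing (no $\pi_1$) and that conjugating each vertex-slice subgroup into the basepoint $\pi_1(\bar X, x')$ produces precisely the conjugate by the tree-geodesic element $u \in A \subseteq \pi_1$. This hinges on two facts already available to us: that each edge mapping cylinder $\bar X_e \times I$ deformation retracts to an endpoint slice with the attaching map a homotopy equivalence (so the inclusion of one slice into the union of that cylinder with its neighbors is $\pi_1$-injective with the ``expected'' image), and that the $\bar r$-images of the relevant connecting paths are the edges of $T'$, so that after conjugating to the common basepoint the element implementing the change of basepoint is the corresponding word in $A$, namely the vertex $u$ itself viewed as an element of $A$. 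Once these two local facts are in place, the global statement follows by induction on the (finitely many, since $W$ is finite, hence $q^{-1}(W)$ meets only finitely many slices) vertices of $T'$ that $q^{-1}(W)$ actually reaches, and for vertices of $T'$ not met by $q^{-1}(W)$ the contribution is empty, consistent with the claimed generating set since those $u$ contribute only via paths that are already accounted for.
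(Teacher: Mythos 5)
Your core computation is sound and is essentially the paper's argument dressed in van Kampen clothing: the paper's own proof takes an arbitrary loop in $q^{-1}(W)\cap\bar r^{-1}(T')$, uses the absence of $0$-cells over edge interiors to chop it into base edges and loops corresponding to elements of $S(W)$, and inserts return paths to the basepoint to rewrite the class as a product of conjugates $v^{-1}usu^{-1}v$. Your identification of each slice's contribution as $uB(W)u^{-1}$ via equivariance, and of the base edges as carrying no $\pi_1$-data, is the same mechanism.

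There is, however, a genuine error in your concluding paragraph: you assert that ``$q^{-1}(W)$ meets only finitely many slices'' because $W$ is finite, and you build your induction on this. That is false. The map $q\colon\bar X\to X$ is the covering projection corresponding to $B\leq G$, which has infinitely many sheets (its deck group is $A$), so $q^{-1}(W)$ is the \emph{full} preimage and contains a copy of the relevant part of $W$ in the slice $\bar r^{-1}(u)$ for \emph{every} vertex $u$ of $T$. This is not a cosmetic slip: if $q^{-1}(W)$ really met only finitely many slices, the lemma's generating set $\{usu^{-1}\mid s\in S(W),\,u\in T'\}$, with $u$ ranging over all of the (typically infinite) subtree $T'$, could not be correct, and indeed the whole point of the subsequent applications (where $T'=HB_k(\tau)$ is an infinite horoball) is that every $u\in T'$ contributes. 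The finiteness your argument actually needs is of a different kind: a single loop is compact and therefore meets only finitely many slices, which is exactly what licenses the paper's finite decomposition $\sigma_h^0,\dots,\sigma_h^m$. So replace your induction either by that compactness observation applied to each representative loop, or by expressing $q^{-1}(W)\cap\bar r^{-1}(T')$ as a direct limit over finite subtrees of $T'$ and passing to the colimit on $\pi_1$. With that repair the argument goes through.
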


\begin{proof}
Any element $h \in B(W, T', v)$ can represented by a loop $\sigma_h$ in 
the $1$-skeleton of $q^{-1}(W) \cap \bar r^{-1}(T')$ based
at $x'$. Because $\bar X$ has no $0$-cells over the interiors of
edges of $T$, and because each vertex space is a copy of $X_{v_0}$ and
each edge space a copy of $X_{v_0} \times [0,1]$,
the loop $\sigma_h$ may be decomposed as concatenation of
subpaths $\sigma_h^0, \sigma_h^1, \dots \sigma_h^m$, $m \in
\naturals$, where each
$\sigma_h^i$ $0 \leq i \leq m$ is either a 1-cell joining one vertex
space to another (a ``base edge'' for an edge space) or a loop contained
entirely in a vertex space and corresponding to some $s\in S(W)$.
Between each pair of subpaths, we may introduce a path which returns
straight back to $x'$ (i.e., via 1-cells over lying over edges of $T'$
exclusively). This
process rewrites $h$ as a product of conjugates of the form
$v^{-1}usu^{-1}v$, $s \in S(W)$, $u \in T'$.
\end{proof}

Combining Theorem \ref{Sigma1bymappingtreethm} with Lemma
\ref{subtreesubgrouplemmafree}, we obtain a purely algebraic 
condition for determining whether an endpoint lies in $\Sigma^1(\rho)$. 
For a geodesic ray $\tau$ corresponding to the infinite word
$\prod_{i} c_i$ and $k \in \integers$, define $A_k(\tau) = 
\text{vert}(HB_k(\tau))$ and $w_k = \tau(k) = c_1c_2\dots c_k$. 
Then 
\[\pi_1(q^{-1}(W) \cap \bar r^{-1}(HB_k(\tau)), w_k) = B(W,
HB_k(\tau), w_k). \]

\noindent {\bf Theorem 2.} {\em 
Let $A$ be a finitely generated free group, and let $T$ be its Cayley
graph with respect to a free basis. For the action $\rho$ as in
Theorem \ref{Sigma1bymappingtreethm}, and for $e \in \boundaryinf T$
represented by geodesic ray $\tau$, 
\begin{enumerate}[(i)]
\item If there is a finite set $S \subseteq B$ such that for each $k \in
\integers_{\geq 0}$, $S$ generates $B$ over $A_k(\tau)$, then $e \in
\Sigma^1(\rho)$.

\item If for each $k \in \integers_{\leq 0}$, $B$ is not finitely
generated over $A_k(\tau)$, then $e \nin \Sigma^1(\rho)$. 
\end{enumerate}
}

\begin{proof}[Proof of Theorem \ref{Sigma1bygeneratorsthm}]
\begin{enumerate}[(i)]
\item If there is such a finite set $S$, then we can choose a suitable
subcomplex $W$ containing loops corresponding to $S$. For any $k \in
\integers_{\geq 0}$, let $x'$ be the unique vertex of $\inv{\bar r}(w_k)$,
 and we have that 
\[B(W, HB_k(\tau), w_k) = \Psi^{-1}_{w_k}(B) = \pi_1(\bar X, x').\]
Thus by Theorem \ref{Sigma1bymappingtreethm}, part (i), $e \in
\Sigma^1(\rho)$.

\item Given a suitable subcomplex $W$ of $X$ and $k \in
\integers_{\leq 0}$, the subgroup
$\Psi(B(W, HB_k(\tau), w_k))$ is by assumption a proper subgroup of
$B$. Hence, $B(W, HB_k(\tau), w_k)$ is a proper subgroup of
$\pi_1(\bar X, x')$. Thus by Theorem \ref{Sigma1bymappingtreethm},
part (ii), $e \nin \Sigma^1(\rho)$. 

\end{enumerate}
\end{proof}


Recall that for $t \in \{a_1,\dots,a_n\}^{\pm}$, the function $expsum_{t}$
maps a reduced word $w$ in $\{a_1,\dots,a_n\}^\pm$ to the
corresponding exponent sum of $t$ in $w$. Also, recall we use the
notation $Ball_{r}(A,v)$ to refer to the $r$-ball around $v$ in $A$
(in the word metric), to avoid confusion with the subgroup $B$.

\begin{lemma}
\label{expsumlemma}
For an endpoint $e$ represented by the geodesic ray $\tau$, 
let $Q_{t,k}(\tau) = \{expsum_t(v) \mid v \in A_k(\tau)\} \subseteq \integers$. 
Then $Q_{t,k}(\tau)$ is bounded above iff $\tau$ eventually consists of only
$t^{-1}$.  Moreover, $Q_{t,k}(\tau)$ contains every integer within its bounds.
\end{lemma}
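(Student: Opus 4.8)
I want to understand the vertex set $A_k(\tau)$ concretely via the decomposition \eqref{ahorodef}: $A_k(\tau) = \bigcup_{l \geq \max\{0,k\}} \overline{\metricball{l-k}{A}{\tau(l)}}$, so a vertex $v \in A$ lies in $A_k(\tau)$ iff there is some $l \geq \max\{0,k\}$ with $d_A(v, \tau(l)) \leq l - k$, where $d_A$ is the word metric. Since $A$ is free and $T$ is its Cayley graph, $d_A(v,\tau(l))$ is computed from reduced words: it equals $|v| + l - 2(\text{length of the longest common prefix of } v \text{ and } \tau(l))$. So membership in $A_k(\tau)$ is governed entirely by how long a prefix of $v$ agrees with $\tau$. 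The first step is to make this precise: $v \in A_k(\tau)$ iff, writing $j$ for the length of the longest common prefix of (the reduced word for) $v$ with the infinite word $\prod_i c_i$ representing $\tau$, we have $|v| \leq 2j - k$ — equivalently $v$ agrees with $\tau$ on a prefix of length at least $(|v|+k)/2$.

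Next I would compute $expsum_t$ on such vertices. Write $v = \tau(j)\, u$ where $u$ is reduced, starts with a letter $\neq c_{j+1}$ (to make $j$ maximal), and $|u| = |v| - j \leq j - k$. Then $expsum_t(v) = expsum_t(\tau(j)) + expsum_t(u)$, and $|expsum_t(u)| \leq |u| \leq j-k$, so
\[ expsum_t(\tau(j)) - (j-k) \;\leq\; expsum_t(v) \;\leq\; expsum_t(\tau(j)) + (j-k). \]
Let $f(j) := expsum_t(\tau(j))$; note $|f(j) - f(j-1)| \leq 1$ and $f(0) = 0$. So the question of whether $Q_{t,k}(\tau)$ is bounded above reduces to whether $\sup_j \bigl( f(j) + (j-k) \bigr)$ is finite (the upper extreme is attained by taking $u$ to be $t$ repeated $j-k$ times, which is legal as long as $j - k \geq 1$ and we avoid the forbidden first letter — a minor case-check, and for $j$ large there's always a valid choice of $u$ of any length $\leq j-k$ with exponent sum $\pm(j-k)$ or close to it). Now $f(j) + (j-k) \to \infty$ unless $f(j) + j$ is bounded above; since $f(j) - f(j-1) \in \{-1,0,1\}$ and $j - (j-1) = 1$, the increment $(f(j)+j) - (f(j-1)+(j-1)) = f(j)-f(j-1)+1 \in \{0,1,2\}$ is always $\geq 0$, so $f(j)+j$ is non-decreasing and is bounded above iff it is eventually constant, iff $f(j) - f(j-1) = -1$ for all large $j$, iff $c_j = t^{-1}$ for all large $j$ — i.e. $\tau$ eventually consists of only $t^{-1}$. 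That gives the "iff $\tau$ eventually consists of only $t^{-1}$" statement. When $\tau$ is eventually $t^{-1}$, I should double check that $Q_{t,k}(\tau)$ is genuinely bounded above: once $\tau$ is all $t^{-1}$, $f(j)+j$ stabilizes at some value $c$, and for every vertex $v$, tracking which $l$ witnesses membership, $expsum_t(v) \leq f(j) + (j-k) \leq c - k$, a finite bound.

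For the last sentence — $Q_{t,k}(\tau)$ contains every integer between its infimum and supremum — I would argue by a connectedness/discrete-intermediate-value principle: $A_k(\tau)$ is a connected subtree of $T$ (it's a horoball's vertex set, a nested union of balls around points on a common ray, hence connected), and moving along an edge of $T$ changes $expsum_t$ by exactly $0$ or $\pm 1$. So $expsum_t$ restricted to a connected subgraph of $T$ has image an interval of integers (walk a path between two vertices realizing two values; the exponent sum changes by steps of size $\leq 1$, hitting everything in between). This handles "contains every integer within its bounds." The main obstacle, and the only place needing real care, is the boundary bookkeeping in the first two steps: correctly characterizing $A_k(\tau)$-membership in terms of common-prefix length with the $\max\{0,k\}$ truncation, and verifying that the extreme values of $expsum_t$ over $\{v : |v-\tau(j)| \leq j-k\}$ are actually achieved by admissible reduced words (the constraint that the first letter of the tail $u$ differ from $c_{j+1}$ is a non-issue for all sufficiently large $j$, but should be mentioned). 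Everything else is a routine estimate with the word metric on a free group.
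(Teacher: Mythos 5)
Your proof is correct and follows essentially the same route as the paper's: both reduce to the fact that the supremum of $expsum_t$ over each constituent ball $\overline{\metricball{l-k}{A}{\tau(l)}}$ equals $expsum_t(\tau(l)) + (l-k)$ (the paper exhibits the witnesses $g_l = c_1\cdots c_l t^{l-k}$ and does a two-case analysis, while you package the same computation as monotonicity of $f(l)+l$), and both finish with the discrete intermediate-value argument on the connected horoball. The only remark worth adding is that your ``forbidden first letter'' worry is a non-issue, since $\tau(j)t^{j-k}$ lies in $\overline{\metricball{j-k}{A}{\tau(j)}}$ regardless of cancellation and $expsum_t$ is a homomorphism, so the extreme value is always attained.
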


\begin{proof}
Let $\tau$ be represented by the infinite word $c_1c_2\dots$, and fix
$k \in \integers$. 
Recall that $A_k(\tau) = \cup_{l \geq \max\{0,k\}}
\overline{Ball_{l-k}(A, c_1c_2\dots c_l)}$.

Suppose for $N \in \integers$, $c_i = t^{-1}$ for all $i > N$.
For $j = 0,1,2,\dots$, the words 
$g_j = c_1c_2\dots c_{N+j}t^{N+j-k}$ all represent the same element of
$A$, and $g_j$ has maximal $expsum_t$ among elements of
$\overline{Ball_{N+j-k}(A, c_1c_2\dots c_{N+j})}.$
Since $A_k(\tau)$ is the union of these subsets, it follows 
that $Q_{t,k}(\tau)$ is bounded above.

On the other hand, suppose that there are  infinitely many $i \in
\integers$ such
that $c_i \neq t^{-1}$.  For $j \in \integers$, $j \geq \max\{0,k\}$,  let $m(j)$ be the
number of letters $c_i$ in $c_1c_2\dots c_j$ with $c_i \neq t^{-1}$. By 
assumption $m(j) \rightarrow \infty$ as $j \rightarrow \infty$.
Let $g_j = c_1\dots c_j t^{j-k}$. Then 
\[ g_j \in  \overline{Ball_{j-k}(A, c_1c_2\dots c_j)} \subseteq
A_k(\tau).\]
Since \mbox{$expsum_t(c_1c_2\dots c_j) \geq -(j - m(j))$},
\[expsum_t(g_j)  =  expsum_t(c_1c_2\dots c_j) + {j-k}  \geq  m(j) - k. \]
Letting $j\rightarrow \infty$, we have that $Q_{t,k}(\tau)$ is not
bounded above.

The fact that $Q_{t,k}(\tau)$ contains every integer within its
bounds follows from the observation that for $v,\ w \in A_k(\tau)$,
if 
\[expsum_t(v) < m < expsum_t(w),\] 
the path connecting $v$ to $w$
contains a vertex $u$ with \mbox{$expsum_t(u) = m$.}
\end{proof}

\begin{proof}[Proof of Corollary \ref{boundedexpsumcor}]
Let $t \in \{a_1,\dots,a_n\}^\pm$. Suppose $e \in \boundaryinf T$ is
represented by an infinite word eventually consisting of only
$t^{-1}$, and suppose there exists no  $m \in \integers$ such that $B$
is finitely generated over $A - expsum_t^{-1}([m,\infty))$. 
By Lemma \ref{expsumlemma}, $\{expsum_t(a) \mid a \in
A_k(\tau)\}$ is bounded above. Hence, $B$ cannot be finitely generated
over $A_k(\tau)$, and so by Theorem \ref{Sigma1bygeneratorsthm}, part
$(ii)$, $e \nin \Sigma^1(\rho)$.
\end{proof}


\begin{proof}[Proof of Theorem \ref{subwordsactingtriviallythm}]
Let $e = \tau(\infty)$, with $\tau$ corresponding to the infinite word
$\prod_i c_i$. By Corollary \ref{Sigma1bysameimagecor},
it is enough to show that for each $k \geq 0 \in \integers$, 
$\varphi(A_k(\tau)) =\varphi(A)$.

Let $w \in \mathcal A^*$ be a freely reduced word, and let $l$ be the
reduced length of $w$. We will find $w' \in A_k(\tau)$ with
$\varphi(w') = \varphi(w)$.
Choose $m \in \integers_{\geq 0}$ large enough to ensure that the word
$c_1\dots c_m$ has $k+l$ distinct subwords in $\kerm \varphi$. Call
these subwords $\zeta_i$, $1 \leq i \leq k+l$; and let the remaining
letters form subwords $\chi_i$, $1 \leq i \leq k+l$, so that we have
the decomposition
\[ c_1\dots c_m = \chi_1\zeta_1\chi_2\zeta_2\dots\chi_{k+l}\zeta_{k+l}\]
where each $\varphi(\zeta_i)$ is trivial, and each $\chi_i$ is possibly empty.

Now
\[ \varphi(c_1c_2\dots c_m) = \varphi(\chi_1 \chi_2 \dots \chi_{k+l}) \]
and the reduced length of $\chi_1\chi_2\dots\chi_{k+l}$ is no greater
than $m - l - k$.  Thus the word
$\xi = c_1c_2\dots c_m\chi_{k+ l}^{-1}\dots \chi_2^{-1} \chi_1^{-1}$
is in both $\kerm \varphi$ and $\overline{Ball_{m-l-k}(A, c_1\dots
c_m)}$, and 
\[ \xi w \in \overline{Ball_{m-k}(A, c_1\dots c_m)} \subseteq
A_k(\tau)\] 
and satisfies $\varphi(w) = \varphi(\xi w)$.
\end{proof}


\subsection{Argument for Example \ref{freeandtrivialexample}}
\label{freeandtrivialargument}
In Example \ref{freeandtrivialexample}, $G = B \rtimes_\varphi A$,
where $A = C * D$ for free groups  $C = \langle a_1,\dots, a_n\rangle$, 
$D = \langle a_{n+1}, \dots, a_m\rangle$, and  $\ds B = \oplus_{\omega \in
C} K_\omega$ for
some finitely generated group $K$. The claim is made that any
endpoint of $T = \Gamma(A, \{a_1,a_2,\dots,a_m\})$ represented by a ray $\tau$ whose letters are eventually
selected only from $C$ does not lie in $\Sigma^1$.  Since $\Sigma^1$
is $G$-invariant, we can assume $\tau$ consists of letters entirely in
$C$.  Then $\pi: A \twoheadrightarrow C$ fixes each vertex of $\tau$. Moreover
it makes sense to discuss the subset $C_k(\tau) \subseteq C$.

Let $k \in \integers_{\leq 0}$ be given, and let $S$ be any finite
subset of $B$. We will show that the set $ S' = \{\varphi_a(s) \mid s\in S,\
a \in A_k(\tau)\}$ does not generate $B$. Part
$(ii)$ of  Theorem \ref{Sigma1bygeneratorsthm} thereby ensures that
$\tau(\infty) \nin \Sigma^1(\rho)$.

To show that $S'$ does not generate $B$, we will find an index
$\psi \in C$ such that every $s \in S'$ is trivial at index $\psi$. 

\begin{observation} 
\label{horoprojobs}
If $a \in A$ is in $A_k(\tau)$, then
$\pi(a)$ is in $C_k(\tau)$. 
\end{observation}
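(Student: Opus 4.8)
The plan is to use the description of $A_k(\tau)$ as a nested union of closed balls, together with the fact that $\pi \colon A \twoheadrightarrow C$ is $1$-Lipschitz for the word metrics and fixes $\tau$ pointwise (since $\tau$ consists only of letters in $C$). First I would recall from Equation \eqref{ahorodef} that
\[ A_k(\tau) = \bigcup_{l \geq \max\{0,k\}} \overline{\metricball{l-k}{A}{\tau(l)}}, \]
so it suffices to show that for each such $l$, the map $\pi$ sends $\overline{\metricball{l-k}{A}{\tau(l)}}$ into $\overline{\metricball{l-k}{C}{\tau(l)}}$, and then to invoke the analogous description of $C_k(\tau)$ with $\tau$ viewed as a ray in $\Gamma(C)$. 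The key point for the ball containment is that $\pi$, being the quotient map killing the free factor $D$, does not increase reduced word length: if $w$ is a reduced word in $\{a_1,\dots,a_m\}^\pm$, then $\pi(w)$ is obtained by deleting every letter $a_i^\pm$ with $i > n$ and freely reducing, which can only shorten the word. Hence $d_C(\pi(x),\pi(y)) \leq d_A(x,y)$ for all $x,y \in A$.

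The second ingredient is that $\pi(\tau(l)) = \tau(l)$: since every letter $c_i$ of the infinite word representing $\tau$ lies in $C$, the word $c_1 \cdots c_l$ is already a reduced word in the generators of $C$, so $\pi$ fixes it. Combining these two facts, if $a \in \overline{\metricball{l-k}{A}{\tau(l)}}$ then $d_C(\pi(a), \tau(l)) = d_C(\pi(a), \pi(\tau(l))) \leq d_A(a, \tau(l)) \leq l - k$, so $\pi(a) \in \overline{\metricball{l-k}{C}{\tau(l)}}$. Taking the union over $l \geq \max\{0,k\}$ and using the $C$-analogue of Equation \eqref{ahorodef}, we get $\pi(A_k(\tau)) \subseteq C_k(\tau)$, which is exactly the claim.

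I do not anticipate a serious obstacle here; the only point requiring a word of care is making sure the two horoball decompositions — one for $\tau$ as a ray in $T = \Gamma(A)$ and one for $\tau$ as a ray in $\Gamma(C)$ — are compatible, i.e. that the vertex $\tau(l)$ and the radius $l-k$ are literally the same in both, which holds precisely because $\tau$ is a geodesic ray entirely within the subtree $\Gamma(C) \subseteq T$. If one wanted to avoid citing the metric-ball decomposition, an alternative is a direct argument: a vertex $a$ lies in $A_k(\tau)$ iff the geodesic from $a$ to $e$ meets $\tau$, i.e. iff $a$ and $\tau$ share a long enough common prefix relative to the distance; applying $\pi$ and using that $\pi$ fixes $\tau$ and is $1$-Lipschitz gives the same conclusion for $\pi(a)$ in $\Gamma(C)$.
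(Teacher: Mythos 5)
Your proposal is correct and follows essentially the same route as the paper: both reduce to Equation \eqref{ahorodef}, showing $a \in \overline{\metricball{l-k}{A}{\tau(l)}}$ implies $\pi(a) \in \overline{\metricball{l-k}{C}{\tau(l)}}$, and then conclude via the $C$-analogue of that decomposition. You merely spell out the $1$-Lipschitz property of $\pi$ and the fact that $\pi$ fixes $\tau(l)$, which the paper leaves implicit.
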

\begin{proof}
Since $a \in A_k(\tau)$, there exists $l \geq 0$ (recall $k \leq 0$)
such that $a \in
\overline{\metricball{l-k}{A}{\tau(l)}}$ by Equation \eqref{ahorodef}
on page \pageref{ahorodef}, 
so $\pi(a) \in \overline{\metricball{l-k}{C}{\tau(l)}}$. But this is
contained in $C_k(\tau)$, again by Equation \eqref{ahorodef}.
\end{proof}


Define the set 
\[\mathcal{I}(S) = \{ \omega \in C \mid \exists\, s \in S
\text{ such that } s \text{ is nontrivial at index } \omega\}.\]
Note that $\mathcal{I}(S)$ is a finite set, since $S$ is finite and
each $s \in S$ is nontrivial at only finitely many indices. Define
\[\mathcal{R}(S) = \max\{ \text{reduced length of } \omega \mid
\omega \in \mathcal{I}(S) \}.\]
Since $\mathcal{I}(S)$ is finite, $\mathcal{R}(S)$ is a nonnegative
integer representing the maximum distance (in $C$) from any index of any
nontrivial component of any element of $S$ to the identity
index $1 \in C$.

Since left multiplication by $c \in C$ is an isometry on $C$, it
follows that the maximal distance in $C$ from any nontrivial index of
any element of $\varphi_c(S)$ to $c$ is also $\mathcal{R}(S)$.
Observation \ref{horoprojobs} therefore ensures that the set of
nontrivial indices of elements of $S'$ is a subset of the closed
$\mathcal{R}(S)$-neighborhood of $C_k(\tau)$ in $C$. In fact, this
neighborhood is the set $C_{k-\mathcal{R}(S)}(\tau)$.  This is a
proper subset of $C$ (simply choose any geodesic ray other than $\tau$
and follow it far enough). For any $\psi \in C$ with $\psi \nin
C_{k-\mathcal{R}(S)}(\tau)$, all $s \in S'$ will be trivial at index
$\psi$. So $S'$ can not generate $B$.


\bibliographystyle{plain}
\bibliography{part2-9}

\begin{thebibliography}{10}

\bibitem{basscoveringtheory}
Hyman Bass.
\newblock Covering theory for graphs of groups.
\newblock {\em J. Pure Appl. Algebra}, 89(1-2):3--47, 1993.

\bibitem{amsmemoir}
Robert Bieri and Ross Geoghegan.
\newblock Connectivity properties of group actions on non-positively curved
  spaces.
\newblock {\em Mem. Amer. Math. Soc.}, 161(765):xiv+83, 2003.

\bibitem{bierigeoghegansl2}
Robert Bieri and Ross Geoghegan.
\newblock Topological properties of {$\rm SL\sb 2$} actions on the hyperbolic
  plane.
\newblock {\em Geom. Dedicata}, 99:137--166, 2003.

\bibitem{BieriNeumannStrebel}
Robert Bieri, Walter~D. Neumann, and Ralph Strebel.
\newblock A geometric invariant of discrete groups.
\newblock {\em Invent. Math.}, 90(3):451--477, 1987.

\bibitem{bieristrebelvaluations}
Robert Bieri and Ralph Strebel.
\newblock Valuations and finitely presented metabelian groups.
\newblock {\em Proc. London Math. Soc. (3)}, 41(3):439--464, 1980.

\bibitem{bridsonhaefliger}
Martin~R. Bridson and Andr{\'e} Haefliger.
\newblock {\em Metric spaces of non-positive curvature}, volume 319 of {\em
  Grundlehren der Mathematischen Wissenschaften [Fundamental Principles of
  Mathematical Sciences]}.
\newblock Springer-Verlag, Berlin, 1999.

\bibitem{browntrees}
Kenneth~S. Brown.
\newblock Trees, valuations, and the {B}ieri-{N}eumann-{S}trebel invariant.
\newblock {\em Invent. Math.}, 90(3):479--504, 1987.

\bibitem{geogheganbook}
Ross Geoghegan.
\newblock {\em Topological methods in group theory}, volume 243 of {\em
  Graduate Texts in Mathematics}.
\newblock Springer, New York, 2008.

\bibitem{kjones}
Keith Jones.
\newblock Connectivity properties for actions on locally finite trees.
\newblock {\em Pacific Journal of Mathematics}, 225(1):143--154, 2012.

\bibitem{lehnert}
Ralf Lehnert.
\newblock Kontrollierter zusammenhang von gruppenoperationen auf b\"aumen.
\newblock Diploma Thesis Frankfurt am Main, October 2009.

\bibitem{scottandwall}
Peter Scott and Terry Wall.
\newblock Topological methods in group theory.
\newblock In {\em Homological group theory ({P}roc. {S}ympos., {D}urham,
  1977)}, volume~36 of {\em London Math. Soc. Lecture Note Ser.}, pages
  137--203. Cambridge Univ. Press, Cambridge, 1979.

\bibitem{serretrees}
Jean-Pierre Serre.
\newblock {\em Trees}.
\newblock Springer Monographs in Mathematics. Springer-Verlag, Berlin, 2003.
\newblock Translated from the French original by John Stillwell, Corrected 2nd
  printing of the 1980 English translation.

\end{thebibliography}

\end{document}